\documentclass{amsart}

\usepackage[a4paper, total={6in, 9in}]{geometry}
\usepackage{amsmath}
\usepackage[hidelinks]{hyperref}
\usepackage{amsfonts}
\usepackage{amssymb}
\usepackage{cleveref}
\usepackage{enumitem}

\newtheorem{theorem}{Theorem}[section]
\newtheorem{corollary}[theorem]{Corollary}
\newtheorem{lemma}[theorem]{Lemma}
\newtheorem{proposition}[theorem]{Proposition}
\theoremstyle{definition}
\newtheorem{definition}[theorem]{Definition}
\newtheorem{example}[theorem]{Example}
\theoremstyle{remark}
\newtheorem{remark}[theorem]{Remark}
\numberwithin{equation}{section}

\begin{document}

\title[Bounded Dehn surgery slopes]{Boundedness of Dehn surgery slopes admitting hyperbolic $\mathrm{PSL}(2,\mathbb{R})$-representations for two-bridge knots}
\author{Shunjiang Jiang and Ran Tao}

\address{Neijiang Vocational \& Technical College,
Neijiang, Sichuan Province, China}
\email{shunjiangnitu@foxmail.com}

\address{School of Mathematical Sciences and V.C. \& V.R. Key Lab, Sichuan Normal University, Chengdu, Sichuan Province, China}
\email{rtao\_sicnu@163.com}

	\begin{abstract}
		We study Dehn fillings on two-bridge knots via non-abelian representations into $\mathrm{PSL}(2,\mathbb{R})$ whose meridian image is hyperbolic. For each fixed nontrivial two-bridge knot, we prove that the set of surgery slopes admitting such representations is bounded. Equivalently, Dehn fillings along slopes with sufficiently large absolute value admit no non-abelian $\mathrm{PSL}(2,\mathbb{R})$ representations with hyperbolic meridian image. The proof combines the Riley polynomial with Khoi's surgery-slope formula. On each admissible real algebraic branch, we express the meridian and longitude translation parameters as functions of the branch parameter and derive uniform endpoint estimates for their quotient. The resulting bound is effective in principle but not optimized. We also provide examples illustrating the parameter sets and endpoint behavior.

		Keywords: Two-bridge knot; Riley polynomial; hyperbolic representation; $\mathrm{PSL}(2,\mathbb{R})$
	\end{abstract}

	\maketitle

	\section{Introduction}
	Let $K=K(\alpha,\beta)$ be a two-bridge knot in Schubert normal form, where $\alpha>1$ is odd, $\gcd(\alpha,\beta)=1$, and $-\alpha<\beta<\alpha$. Riley's foundational work on non-abelian representations of two-bridge knot groups~\cite{Riley2,Riley} initiated a fertile line of research on their representation theory. More broadly, representation and character varieties have become central tools for relating algebraic properties of knot groups to topological properties of $3$-manifolds~\cite{CullerShalen}. For two-bridge knots, the explicit group presentations and peripheral structures make real $\mathrm{SL}(2,\mathbb{R})$- and $\mathrm{PSL}(2,\mathbb{R})$-representations particularly tractable under Dehn filling. This leads naturally to the problem of determining which surgery slopes admit non-abelian real representations of the fundamental groups of the corresponding filled manifolds.

	A concrete motivation comes from representation volume, particularly the Seifert volume, which is defined using Godbillon--Vey invariants of $\mathrm{PSL}(2,\mathbb{R})$ representations~\cite{Volume,SU}. In their seminal work on Seifert volume, Brooks and Goldman~\cite{Volume} showed that, for surgeries on the figure-eight knot, the relevant Seifert-volume contribution is supported only on slopes in a bounded interval, namely $(-4,4)$, and vanishes outside it. In their analysis, this bounded-slope behavior is tied to the fact that the relevant non-abelian real representations occur only in that range. This suggests asking whether an analogous bounded-slope phenomenon holds more generally for real representations arising from Dehn fillings.

	In this paper we answer this question for non-abelian $\mathrm{PSL}(2,\mathbb{R})$ representations whose meridian image is hyperbolic.

	\begin{theorem}\label{thm:main}
		Let $K=K(\alpha,\beta)$ be a nontrivial two-bridge knot. Consider the rational slopes $p/q$ with $q\neq0$ for which $S^3_{p/q}(K)$ admits a non-abelian $\mathrm{PSL}(2,\mathbb{R})$ representation with hyperbolic meridian image. This set of slopes, viewed as a subset of $\mathbb{Q}\subset\mathbb{R}$, is bounded. Equivalently, there exists a constant $C(K)>0$ such that, for every rational slope $p/q$ with $q\neq0$ and $|p/q|>C(K)$, the manifold $S^3_{p/q}(K)$ admits no such representation. The same constant $C(K)$ also rules out non-abelian $\mathrm{SL}(2,\mathbb{R})$ representations with hyperbolic meridian image.
	\end{theorem}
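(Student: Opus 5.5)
We argue through the Riley normal form. Every non-abelian representation $\rho\colon\pi_1(S^3\setminus K)\to\mathrm{SL}(2,\mathbb{C})$ is conjugate to one with
\[
\rho(a)=\begin{pmatrix} M & 1\\ 0 & M^{-1}\end{pmatrix},\qquad \rho(b)=\begin{pmatrix} M & 0\\ -u & M^{-1}\end{pmatrix},
\]
where $a,b$ are the two meridian generators and $(M,u)$ satisfies the Riley polynomial $\phi_K(M,u)=0$. For a $\mathrm{PSL}(2,\mathbb{R})$ representation with hyperbolic meridian image we first lift to $\mathrm{SL}(2,\mathbb{R})$; the obstruction vanishes for knot exteriors. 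Up to the sign ambiguity we may then take $M=e^{t/2}$ with $t>0$ real, and $u\in\mathbb{R}$. This explains why the same constant will also handle $\mathrm{SL}(2,\mathbb{R})$.

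In this normal form the longitude is upper triangular, $\rho(\lambda)=\pm\begin{pmatrix} L&*\\0&L^{-1}\end{pmatrix}$, with $L=L(M,u)$ given by Riley's explicit formula in terms of the word $w$. The representation descends to $S^3_{p/q}(K)$ exactly when $\rho(\mu^p\lambda^q)=\pm I$. For a non-abelian representation with hyperbolic $\rho(\mu)$ this forces $\rho(\lambda)$ to be diagonalizable in the same basis, and hence gives $M^pL^q=\pm1$. Writing $L=\pm e^{s/2}$ with $s$ real, this becomes Khoi's surgery-slope formula $p/q=-s/t$. Note that $L$ is real because $u$ is real, and the sign is absorbed by $\pm I$. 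So it suffices to bound $|s/t|$ over all real points $(t,u)$ with $t>0$ on $\phi_K(e^{t/2},u)=0$.

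The real locus $\{(x,u): x=M+M^{-1}>2,\ \phi_K=0\}$ is a real algebraic curve. Off finitely many critical values of $x$, it splits into finitely many branches $u=u_i(x)$, each algebraic, and on each branch $L$ is a real algebraic function of $x$. Since $\log$ of an algebraic function grows at most logarithmically in $x$, while $t=2\operatorname{arccosh}(x/2)$ grows like $2\log x$, the quotient $s/t$ stays bounded on compact subsets of each branch. It therefore remains to control three kinds of ends.
\begin{enumerate}[label=(\roman*)]
\item At $x\to\infty$, Puiseux expansions give $L\sim c\,M^{k}$ for a rational $k$, so $s/t\to k$. These $k$ are the boundary slopes (Culler--Shalen), which are finite.
\item At interior branch or critical points, where $L$ stays finite and nonzero, the quotient is bounded.
\item At $x\to2^+$, so $t\to0$, we need $s\to0$ at least linearly in $t$.
\end{enumerate}

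We expect (iii) to be the main obstacle. There $M\to1$, and either the branch ends at a reducible (abelian) point, i.e.\ a root of the Alexander polynomial with $u\to0$, or at a parabolic non-abelian representation. In the second case $L\to\pm1$, and we must show $\log|L|=O(t)$. We would try to prove this by expanding $L$ in $M-M^{-1}$: the symmetry $M\mapsto M^{-1}$ sends $L\mapsto L^{-1}$, which makes $\log|L|$ odd in $t$ and hence $O(t)$ when the branch is smooth in $t$. At singular points we would use a Puiseux expansion instead. The quotient $s/t$ then tends to a finite limit (the cusp-shape-type ratio), which is bounded. At reducible endpoints $u\to0$, $L\to1$, and the same oddness argument applies. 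Taking the maximum over the finitely many branches and endpoint limits yields $C(K)$.
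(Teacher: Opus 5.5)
Your reduction to bounding $|s/t|$ and your treatment of $x\to\infty$ are fine, but step (iii) does not close as written. Oddness of $\log|L|$ in $t$ (after extending the branch by $u(-t)=u(t)$) gives $O(t)$ only if the extended function is differentiable at $t=0$. Your fallback at singular points, ``use a Puiseux expansion instead'', gives only $\log|L|=c\,t^{r}+o(t^{r})$ for some rational $r>0$. Oddness does not force $r\ge 1$ for a non-smooth function; for example, $\operatorname{sign}(t)|t|^{1/2}$ is odd. Nothing in your sketch excludes a branch with $u-u_0\asymp t^{2/3}$, along which this reasoning yields only $\log|L|=O(t^{2/3})$ and hence no bound on $s/t$.

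This is exactly where the paper uses an extra input, Riley's Lemma~\ref{P2}: the parabolic polynomial $\Phi(1,\cdot)$ is squarefree, so its roots are simple. In your coordinates this gives $\partial_u\phi\neq 0$ at every limit point $(x,u)=(2,u_0)$. By the implicit function theorem the branch is then an analytic graph $u=u(x)$ near $x=2$. Since $x=2\cosh(t/2)$, $u$ is analytic and even in $t$, no singular points occur, and your oddness argument goes through.

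Alternatively, your symmetry can do all the work with no smoothness assumption if you freeze $u$. If $(M,u)$ lies on the curve, so does $(M^{-1},u)$, hence $2\log|L(M,u)|=\log|L(M,u)|-\log|L(M^{-1},u)|$. Let $\ell(M,u)$ be the Laurent polynomial giving the $(1,1)$ entry of the longitude matrix; $\log|\ell|$ is smooth near $(1,u_0)$ because $\ell(1,u_0)=\pm1$. The mean value theorem in $M$ then gives $|s|\le C|M-M^{-1}|=O(t)$.

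Either way, you must prove $L(M^{-1},u)=L(M,u)^{-1}$ on the curve rather than assert it. Lemma~\ref{lem:w-star-symmetry} gives $\Lambda(M^{-1},u)=P\bigl(\rho(a)^{2\sigma}\Lambda(M,u)\rho(a)^{-2\sigma}\bigr)^{T}P$. On the curve, $\Lambda(M,u)$ commutes with $\rho(a)$ and is upper triangular, so the $(1,1)$ entry of the left side equals the $(2,2)$ entry of $\Lambda(M,u)$.

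Two smaller corrections. First, the ``reducible endpoint'' alternative at $x\to 2$ is vacuous. At $(M,u)=(1,0)$ we have $\rho(b)=I$ and $W$ is upper unitriangular, so the Riley polynomial equals $1$ there (equivalently, $\Delta_K(1)=\pm1$). Hence every branch reaching $x=2$ ends at a non-abelian parabolic point with $u_0\neq0$. Second, your finite branch decomposition over the $x$-line, and your claim (ii), both rely on $\phi$ having constant leading coefficient $(-1)^m$ in $u$ (Corollary~\ref{cor:highest-s-term}). This is what keeps $u$, and hence $L^{\pm1}$, bounded over bounded $x$, so that $s$ is bounded whenever $x$ stays in a compact subset of $(2,\infty)$. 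Your growth-rate remark is not the reason.

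With these repairs your route is genuinely different from the paper's, and somewhat leaner. You use $x$ (a function of the paper's $z$) as the branch variable instead of Riley's parameter. Monicity then removes the paper's vertical cells and its finite-$s$ endpoints with $g\to\infty$ (Lemma~\ref{lem:boundary-behavior}(III) and case (II) of the proof). At $x=2$, Riley's lemma gives analyticity at once, avoiding the paper's split into differentiable and non-differentiable $f$.
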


	In terms of Seifert volume, Theorem~\ref{thm:main} rules out one possible source of large-slope contributions, namely non-abelian $\mathrm{PSL}(2,\mathbb{R})$ representations with hyperbolic meridian image. Any remaining nonzero contribution for large slopes would therefore have to come from the non-hyperbolic part of the representation theory, which requires separate input.

	The result is also relevant to approaches to left-orderability via representations into the universal cover $\widetilde{\mathrm{PSL}}(2,\mathbb{R})$~\cite{Orderable}. In many such arguments, one constructs real $\mathrm{SL}(2,\mathbb{R})$ representations and studies whether the associated projective representations lift to $\widetilde{\mathrm{PSL}}(2,\mathbb{R})$. This approach appears, for example, in work on twist knots~\cite{TeragaitoTwist}, genus-one two-bridge knots~\cite{HakaTeraGenusOne}, double twist knots~\cite{Tran2,Tran}, and related families~\cite{Gao,Thakar2023}. Theorem~\ref{thm:main} therefore shows that, for sufficiently large surgery slopes, such an $\mathrm{SL}(2,\mathbb{R})$ input cannot be non-abelian with hyperbolic meridian image.

	Our approach is based on the Riley polynomial together with the $\mathrm{SU}(1,1)$ parameterization developed below. The Riley equation provides a real parameter on each admissible algebraic branch of representations. Along such a branch, the meridian and longitude translation parameters become two real functions. The Dehn-filling relation then implies that every surgery slope realized by such a representation is, up to sign, the quotient of these functions. The proof therefore reduces to proving that these quotient functions are uniformly bounded, including as one approaches the endpoints of each branch. This gives a bound that is effective in principle, although we do not compute or optimize it here.

	The paper is organized as follows. Section~2 reviews the Riley polynomial and the $\mathrm{SU}(1,1)$ parameterization, and then derives the slope formula used in the proof. Section~3 proves Theorem~\ref{thm:main}, presents two illustrative computations, and records consequences for $\widetilde{\mathrm{PSL}}(2,\mathbb{R})$-based methods and Seifert volume.

\section{Representation and slope setup}
	This section fixes the representation-theoretic notation and derives the slope function used in the proof.
	\subsection{Riley polynomial}
	Let $\nu K$ be an open tubular neighborhood of $K$. The exterior $S^3\setminus\nu K$ has fundamental group
	\[
	\pi_1(S^3\setminus\nu K)=\langle x,y\mid wx=yw\rangle.
	\]
	Here $ w=x^{\varepsilon _1}y^{\varepsilon _2}\cdots x^{\varepsilon _{\alpha -2}}y^{\varepsilon _{\alpha -1}} $, where $ \varepsilon _i=(-1)^{\left\lfloor \beta i/\alpha\right\rfloor} $ for $ i=1,2,\ldots,\alpha-1 $, and $\left\lfloor\cdot\right\rfloor$ denotes the floor function. The meridian is $ \mu=x $, and the Seifert longitude is $ \lambda=x^{-2\sigma} w^* w $, where $w^*=y^{\varepsilon _{\alpha -1}}x^{\varepsilon _{\alpha -2}}\cdots  y^{\varepsilon _2}x^{\varepsilon _1}$ and $ \sigma= \sum_{j=1}^{\alpha -1}{\varepsilon _j}$. Riley considered the following assignment of the generators.

	\[
	\rho \left( x \right) =\left( \begin{matrix}
		t&		1\\
		0&		1\\
	\end{matrix} \right),
	\qquad
	\rho \left( y \right) =\left( \begin{matrix}
		t&		0\\
		-ts&		1\\
	\end{matrix} \right),
	\qquad t,s\in \mathbb{C}.
	\]
	Write $ W=\rho ^{\varepsilon _1}\left( x \right)\rho ^{\varepsilon _2}\left( y \right)\cdots \rho ^{\varepsilon _{\alpha -2}}\left( x \right)\rho ^{\varepsilon _{\alpha -1}}\left( y \right)=\left( \begin{smallmatrix} w_{11}&w_{12}\\w_{21}&w_{22} \end{smallmatrix} \right)  $. With this notation, Riley's criterion is the following.
	\begin{theorem}[Riley, Theorem~1~\cite{Riley}]
		The assignment $ \rho $ defines a non-abelian representation of the knot group if and only if the pair $(t,s)$ satisfies the equation
		\[
		w_{11}+(1-t)w_{12}=0.
		\]
	\end{theorem}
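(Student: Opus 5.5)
The plan is to reduce the representation condition to a single matrix identity and then compute one matrix entry. Here $\rho(x)$ and $\rho(y)$ are given by the displayed matrices and $W=\rho(w)$. The assignment defines a representation if and only if $WX=YW$, where $X=\rho(x)$ and $Y=\rho(y)$. For the implication that non-abelian forces the equation, we also need to see why $s\neq 0$ and the Riley equation appear.

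The first step uses the symmetry of $w$. Since $\varepsilon_i=\varepsilon_{\alpha-i}$, the word $w$ is palindromic in the sense that $w^*$ is $w$ read backwards with $x$ and $y$ interchanged. We introduce an anti-automorphism of $2\times 2$ matrices that sends $X$ to $Y$ and $Y$ to $X$. A natural choice is
\[
M\mapsto D M^{T} D^{-1}, \qquad D=\begin{pmatrix}1&0\\0&-ts\end{pmatrix}\ \text{or a suitable variant}.
\]
We check directly that this map interchanges $X$ and $Y$, possibly up to a scalar conjugation. It then follows that $W$ has a symmetry relating $w_{12}$ and $w_{21}$, namely $w_{21}=-ts\,w_{12}$ or a similar identity. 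This is the key structural input.

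Next we compute $WX-YW$ entrywise. We have
\[
WX=\begin{pmatrix}tw_{11}& w_{11}+w_{12}\\ tw_{21}& w_{21}+w_{22}\end{pmatrix},
\qquad
YW=\begin{pmatrix}tw_{11}& tw_{12}\\ -tsw_{11}+w_{21}& -tsw_{12}+w_{22}\end{pmatrix}.
\]
The $(1,1)$ entries agree automatically. The $(1,2)$ entry of the difference is $w_{11}+(1-t)w_{12}$. The $(2,1)$ entry is $(t-1)w_{21}+tsw_{11}$. The $(2,2)$ entry is $w_{21}+tsw_{12}$. Substituting the symmetry $w_{21}=-tsw_{12}$ makes the $(2,2)$ entry vanish identically. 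It also turns the $(2,1)$ entry into $-ts\,\bigl(w_{11}+(1-t)w_{12}\bigr)$. So $WX=YW$ holds if and only if $w_{11}+(1-t)w_{12}=0$, which proves sufficiency. Non-abelianness comes for free when $ts\neq0$, because $X$ and $Y$ then do not commute.

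For necessity, we start from an arbitrary non-abelian representation. The two meridians $x$ and $y$ are conjugate, so their images have equal trace. The goal is to conjugate them into the normal form above with $s\neq0$; this is Riley's normalization. If $\rho(x)$ is parabolic or diagonalizable, we choose a basis in which $\rho(x)$ is upper triangular and $\rho(y)$ is lower triangular, using a common non-fixed vector. Rescaling the eigenvalue then gives $t$, since the matrices are defined up to scalar in the $\mathrm{PSL}$ setting. Non-abelian forces the off-diagonal entries to be nonzero, so we can rescale them to $1$ and $-ts$.

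The main obstacle is establishing the symmetry $w_{21}=-tsw_{12}$ cleanly, by finding the correct transpose-type anti-involution for the normalized matrices. The normalization in the necessity direction is routine, but it needs care in degenerate cases such as $t=1$ or a shared eigenvector.
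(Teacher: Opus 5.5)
The paper gives no proof of this statement; it quotes it from Riley. Your argument is essentially Riley's own and is correct in substance. A few points need tightening.

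\emph{The symmetry step.} No ``variant'' is needed. With $D=\mathrm{diag}(1,-ts)$ one checks $DX^{T}D^{-1}=Y$ and $DY^{T}D^{-1}=X$ exactly. The identity $\theta(W)=W$ then follows from $\varepsilon_i=\varepsilon_{\alpha-i}$. That identity holds because $\lfloor\beta(\alpha-i)/\alpha\rfloor=\beta-1-\lfloor\beta i/\alpha\rfloor$, so it uses that $\beta$ is odd; this is the usual Schubert normalization, which the paper leaves implicit. Since $D$ is invertible only for $s\neq0$, you must get $w_{21}=-ts\,w_{12}$ at $s=0$ separately. Either treat it as a polynomial identity, or note that $X$, $Y$, $W$ are then upper triangular.

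\emph{Two small slips.} After substitution the $(2,1)$ entry is $+ts\bigl(w_{11}+(1-t)w_{12}\bigr)$, not $-ts(\cdots)$; this is harmless. More importantly, you check non-abelianness only for $ts\neq0$. In general $XY=YX$ exactly when $(t,s)=(1,0)$. At that point $W$ is unipotent upper triangular, so $w_{11}+(1-t)w_{12}=1\neq0$. Hence every solution gives a non-abelian representation, including those with $s=0$.

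\emph{The ``only if'' direction.} For the statement as written, this is just the vanishing of the $(1,2)$ entry of $WX-YW$, which your computation already gives; no normalization is needed. Your last paragraph instead tries to prove the stronger claim that every non-abelian representation is conjugate into Riley's form. That paragraph contains a false step: being non-abelian does not force $s\neq0$, nor both off-diagonal entries to be nonzero.

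Reducible non-abelian representations exist. For the trefoil $K(3,1)$ one has $w=xy$, and $s=0$ with $t^2-t+1=0$ solves the equation while $XY\neq YX$. In Riley's form such representations have $s=0$, with $\rho(y)$ diagonal. If you keep that paragraph, choose the basis so that the $(1,2)$ entry of $\rho(x)$ is nonzero (for instance, take $e_1$ to be the common eigenvector in the reducible case), and allow $s=0$.
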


	We will also use the unimodular normal form $ \rho \left( x \right) =\left( \begin{smallmatrix}
		t^{1/2}&		1\\
		0&	t^{-1/2}\\
	\end{smallmatrix} \right)  $ and $ \rho \left( y \right) =\left( \begin{smallmatrix}
		t^{1/2}&		0\\
		-s&	t^{-1/2}\\
	\end{smallmatrix} \right)  $. This form has the same Riley polynomial. Recall that $ \sigma=\sum_{j=1}^{\alpha -1}{\varepsilon _j} $ is even, since it is a sum of the even number $\alpha-1$ of signs $\pm1$. Riley's normalized polynomial is
	\[
	\varPhi(t,s)=t^{-\sigma/2} \left[w_{11}+(1-t)w_{12}\right].
	\]
	Riley's normalized polynomial satisfies the following symmetry.
	\begin{proposition}[Riley~\cite{Riley}, Proposition~1]\label{P1}
		$ \varPhi(t,s)=\varPhi(t^{-1},s) $.
	\end{proposition}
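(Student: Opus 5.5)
The plan is to realize the substitution $t\mapsto t^{-1}$ as a transpose-type anti-automorphism of $\mathrm{SL}(2,\mathbb{C})$ that fixes the shape of Riley's generators, and then to use the palindromic structure of the word $w$. I will work throughout in the unimodular normal form, writing $X_t=\rho(x)$, $Y_t=\rho(y)$ with entries $t^{\pm1/2}$ and $s$, and $W_t=X_t^{\varepsilon_1}Y_t^{\varepsilon_2}\cdots Y_t^{\varepsilon_{\alpha-1}}$. Let $J=\left(\begin{smallmatrix}0&1\\1&0\end{smallmatrix}\right)$.

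\emph{Step 1.} A direct computation gives
\[
JX_t^{T}J=\begin{pmatrix}t^{-1/2}&1\\0&t^{1/2}\end{pmatrix}=X_{t^{-1}},\qquad JY_t^{T}J=\begin{pmatrix}t^{-1/2}&0\\-s&t^{1/2}\end{pmatrix}=Y_{t^{-1}}.
\]
The map $M\mapsto JM^{T}J$ reverses products and commutes with inversion. Hence $JW_t^{T}J$ is the matrix of the reversed word $w^*$, evaluated at $t^{-1}$.

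\emph{Step 2.} I will use the classical symmetry $\varepsilon_i=\varepsilon_{\alpha-i}$. It holds because $\lfloor\beta(\alpha-i)/\alpha\rfloor=\beta-1-\lfloor\beta i/\alpha\rfloor$ for $0<i<\alpha$, and $\beta$ is odd after the usual normalization, or one argues via $-\beta$ and the sign convention. This gives $w^*=w$ as words, so
\[
W_{t^{-1}}=JW_t^{T}J.
\]
In coordinates, $w_{11}(t^{-1})=w_{22}(t)$, $w_{22}(t^{-1})=w_{11}(t)$, and the off-diagonal entries $w_{12},w_{21}$ are invariant.

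\emph{Step 3.} Next I need a second relation among the entries. Conjugating instead by $D=\mathrm{diag}(1,-s)$ (or a similar diagonal matrix) sends $X_t^{T}\mapsto Y_t$ and $Y_t^{T}\mapsto X_t$, up to the appropriate scaling. Combined with the fact that $w$ is palindromic and alternates $x$ and $y$, this yields an identity such as $w_{21}=-s\,w_{12}$. I will also use $\det W=1$ and the form of the matrix $WX-YW$, whose vanishing encodes the Riley criterion. Together these express $w_{22}$ in terms of $w_{11}$ and $w_{12}$ modulo the Riley expression. Concretely, I expect an identity of the form
\[
t^{-\sigma/2}\bigl[w_{11}+(1-t)w_{12}\bigr]=w_{11}+(t^{-1/2}-t^{1/2})\,w_{12}\cdot(\text{unit})=w_{22}+(t^{1/2}-t^{-1/2})\,w_{12}\cdot(\text{unit}),
\]
that is, the normalized polynomial equals both a ``$w_{11}$-form'' and a ``$w_{22}$-form''. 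The first equality converts the non-unimodular Riley matrices to the unimodular ones by scaling, which is exactly where the factor $t^{-\sigma/2}$, with $\sigma$ even, is absorbed.

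\emph{Step 4.} Apply Step 2 to the $w_{11}$-form at $t^{-1}$. It becomes $w_{22}(t)+(t^{1/2}-t^{-1/2})w_{12}(t)$, which by Step 3 is $\varPhi(t,s)$. This proves $\varPhi(t^{-1},s)=\varPhi(t,s)$.

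The main obstacle is Step 3: pinning down the exact identity relating $w_{22}$ to $w_{11}$ and $w_{12}$, together with the bookkeeping of the powers $t^{\pm1/2}$ when passing between the two normal forms. I would first verify it on the trefoil and the figure-eight knot. Then I would prove it by induction on the length of the palindromic word, peeling off the outer letters $x^{\varepsilon_1}\cdots x^{\varepsilon_1}$.
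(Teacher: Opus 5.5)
The paper gives no proof of this proposition; it quotes Riley. So your argument has to stand on its own, and it breaks at Step~2.

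The letters of $w=x^{\varepsilon_1}y^{\varepsilon_2}\cdots x^{\varepsilon_{\alpha-2}}y^{\varepsilon_{\alpha-1}}$ alternate, and $\alpha-1$ is even, so $w$ starts with $x$ and ends with $y$. Reversing it and using $\varepsilon_{\alpha-i}=\varepsilon_i$ gives $w^*=y^{\varepsilon_1}x^{\varepsilon_2}\cdots y^{\varepsilon_{\alpha-2}}x^{\varepsilon_{\alpha-1}}$. This is $w$ with $x$ and $y$ interchanged, not $w$.

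Your Step~1 correctly gives $JW(u)^TJ=W^*(u^{-1})$ with $u=t^{1/2}$; this is the paper's Lemma~\ref{lem:w-star-symmetry}. But since $w^*\neq w$, the identity $W(u^{-1})=JW(u)^TJ$ and its coordinate consequences are false. For the trefoil, $w=xy$ and $W=\left(\begin{smallmatrix}u^2-s&u^{-1}\\-su^{-1}&u^{-2}\end{smallmatrix}\right)$. Then $w_{11}(u^{-1})=u^{-2}-s\neq w_{22}(u)$, and $w_{12}(u^{-1})=u\neq w_{12}(u)$.

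The ``$w_{22}$-form'' you expect in Step~3 also fails. For the trefoil, $\varPhi=w_{11}+(u^{-1}-u)w_{12}=u^2+u^{-2}-1-s$, whereas $w_{22}+(u-u^{-1})w_{12}=1$, and no unit factor fixes this. A congruence modulo the Riley expression would not help either, since you need an identity in $\mathbb{Z}[s][t^{\pm1}]$. What is true is $\varPhi(t^{-1},s)=w^*_{22}(u)+(u-u^{-1})w^*_{12}(u)$, with the entries of $W^*$ rather than $W$. So Step~4 has nothing to stand on.

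A smaller point: the palindrome $\varepsilon_{\alpha-i}=\varepsilon_i$ needs $\beta$ odd. That is arranged by $\beta\mapsto\beta\pm\alpha$; replacing $\beta$ by $-\beta$ does not change parity.

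The missing ingredient is a symmetry that interchanges $X_u$ and $Y_u$ at the same $u$. Your $D$-conjugation is not that. The map $M\mapsto DM^TD^{-1}$ reverses and swaps, so it only re-encodes the palindrome. Composing it with $M\mapsto JM^TJ$ gives $X_u\mapsto Y_{u^{-1}}$, $Y_u\mapsto X_{u^{-1}}$, which again sends $w$ to $w^*$.

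The $D$-argument does correctly give $w_{21}=-s\,w_{12}$. That yields the matrix identity $WX_u-Y_uW=\varPhi\,N$ with $N=\left(\begin{smallmatrix}0&1\\ s&0\end{smallmatrix}\right)$, and from there you can finish quickly. Multiply on the right by $Y_u^{-1}$ and take traces. Since $\operatorname{tr}(NY_u^{-1})=s$, you get
\[
s\,\varPhi(t,s)=\operatorname{tr}\bigl(WX_uY_u^{-1}\bigr)-\operatorname{tr}W .
\]
Both traces are integer polynomials in $\operatorname{tr}X_u=\operatorname{tr}Y_u=u+u^{-1}$ and $\operatorname{tr}X_uY_u=u^2+u^{-2}-s$. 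All of these are fixed by $u\mapsto u^{-1}$, so cancelling $s$ gives the claim.

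If you prefer to keep your transpose idea, use $C=\left(\begin{smallmatrix}\delta&-1\\ s&-\delta\end{smallmatrix}\right)$ with $\delta=v^{-1}-v$ and $v=u^{-1}$.
\begin{itemize}
\item $C$ satisfies $CX_v=Y_vC$ and $CY_v=X_vC$, hence $CW(v)=W^*(v)C$.
\item Applying $M\mapsto JM^TJ$ to the matrix identity (note $JN^TJ=N$) gives $X_vW^*(v)-W^*(v)Y_v=\varPhi(t,s)N$.
\item Right-multiply by $C$, use the matrix identity at $v$ and $CN=-NC$. This gives $\varPhi(t,s)NC=\varPhi(t^{-1},s)NC$ with $NC\neq0$.
\end{itemize}
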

	The passage from this symmetry to the variable $t+t^{-1}$ uses the following elementary algebraic observation.
	\begin{lemma}\label{lem:symmetric-laurent}
		Let $ R $ be a commutative ring. If $ P(t)\in R[t,t^{-1}] $ satisfies $ P(t)=P(t^{-1}) $, then $ P(t) $ can be written as a polynomial in $ t+t^{-1} $ with coefficients in $ R $.
	\end{lemma}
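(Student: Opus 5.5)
We argue by strong induction on the \emph{spread} of $P$. Write $P(t)=\sum_{k=-m}^{n} a_k t^k$ with $a_k\in R$. The hypothesis $P(t)=P(t^{-1})$ means, by comparing coefficients in the free $R$-module $R[t,t^{-1}]$ with basis $\{t^k\}_{k\in\mathbb{Z}}$, that $a_k=a_{-k}$ for every $k$. Hence we may take $m=n$ and write
\[
P(t)=a_0+\sum_{k=1}^{n} a_k\,(t^k+t^{-k}).
\]
It therefore suffices to show that each $t^k+t^{-k}$ ($k\ge 1$) is a polynomial in $u=t+t^{-1}$ with integer coefficients, hence with coefficients in $R$ after applying the canonical map $\mathbb{Z}\to R$. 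This reduction is valid over any commutative ring, since no division is involved.

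For that claim we use the recursion
\[
t^{k+1}+t^{-(k+1)}=(t+t^{-1})(t^k+t^{-k})-(t^{k-1}+t^{-(k-1)}),\qquad k\ge1,
\]
which follows by expanding the product. Set $c_0=2$, $c_1=u$, and $c_{k+1}=u\,c_k-c_{k-1}$. Induction on $k$ then gives $t^k+t^{-k}=c_k(u)$, where $c_k\in\mathbb{Z}[u]$ is monic of degree $k$. These are the rescaled Chebyshev (Dickson) polynomials.

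Substituting gives $P(t)=a_0+\sum_{k=1}^n a_k c_k(t+t^{-1})$, which is a polynomial in $t+t^{-1}$ with coefficients in $R$.

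If one wants uniqueness as well, note that the $c_k$ are monic of degree $k$, so $1,c_1,\dots,c_n$ form a triangular basis. However, the lemma only asks for existence.

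There is no real obstacle in this argument. The only point needing care is that the coefficient comparison $a_k=a_{-k}$ uses the fact that the monomials $t^k$ are $R$-linearly independent in $R[t,t^{-1}]$. The polynomials $c_k$ also have integer coefficients, so the argument never requires $2$ to be invertible in $R$.

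In the application one takes $R=\mathbb{Z}[s]$ and $P=\varPhi(\cdot,s)$. Proposition~\ref{P1} then shows that the Riley polynomial is a polynomial in $s$ and $t+t^{-1}$.
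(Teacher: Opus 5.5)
Your proof is correct and follows the paper's own argument: you compare coefficients to get $a_k=a_{-k}$, then use the recursion $c_0=2$, $c_1=u$, $c_{k+1}=u\,c_k-c_{k-1}$ (the paper's $U_n$) to write each $t^k+t^{-k}$ as an integer polynomial in $t+t^{-1}$. One small point: your opening mention of ``strong induction on the spread'' is never used, since the argument is direct, so you can drop that phrase.
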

	\begin{proof}
		Write $ P(t)=\sum_{m=-N}^N c_mt^m $. The equality $P(t)=P(t^{-1})$ in the Laurent polynomial ring implies $c_m=c_{-m}$ for every $m$. Hence
		\[
		P(t)=c_0+\sum_{n=1}^N c_n\left(t^n+t^{-n}\right).
		\]
		Define polynomials $ U_0(z)=2 $, $ U_1(z)=z $, and $ U_{n+1}(z)=zU_n(z)-U_{n-1}(z) $ for $ n\ge 1 $. Then $ U_n\left( t+t^{-1} \right)=t^n+t^{-n} $ for every $ n\ge 1 $. Thus $ P(t)=c_0+\sum_{n=1}^N c_nU_n\left( t+t^{-1} \right) $, as required.
	\end{proof}
	We shall also use the following result of Riley on repeated factors.
	\begin{lemma}[Riley~\cite{Riley}, Lemma~3]\label{P2}
		Let $ R_0= \mathbb{Z}\left[ t,t^{-1} \right]$. With Riley's second parameter $u$ renamed as $s$, and with his exponent equal to $\sigma/2$, the polynomial $ \varPhi(t,s) $ has no repeated factors in $ R_0[s]$ except for $ R_0$-units. Moreover, the parabolic specialization $ \varPhi(1,s) $ has no repeated factors in $ \mathbb{Z} [s]$.
	\end{lemma}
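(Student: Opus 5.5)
The plan is to prove the parabolic statement first, by reducing modulo $2$, and then to lift it to $R_0[s]$ by specializing $t=1$. Both steps rest on one preliminary fact. As a polynomial in $s$, $\varPhi(t,s)$ has degree $n:=(\alpha-1)/2$, and its leading coefficient is a unit $\pm t^{k}$ of $R_0$.

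I would prove this fact by induction along the word $W$. In the unimodular normal form, each factor $\rho(y)^{\pm1}$ contributes $s$ to first order only in an off-diagonal entry. A pair $\rho(x)^{\varepsilon}\rho(y)^{\varepsilon'}$ then raises the $s$-degree of the relevant entry by exactly one. The top coefficient is a product of the entries $\pm t^{\pm1/2}$ and $\pm1$, so it cannot cancel. Tracking this gives $w_{11}$ of degree $n$ with monomial top coefficient, and $w_{12}$ of degree at most $n-1$. Consequently $\varPhi(t,s)$ has degree $n$ with leading coefficient $\pm t^{k}$. I expect this bookkeeping to be the main technical obstacle. If the direct induction becomes messy, I would fall back on the standard Chebyshev recursion for the product of the $n$ pairs.

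\emph{Parabolic case.} At $t=1$ we have $\rho(x)=\left(\begin{smallmatrix}1&1\\0&1\end{smallmatrix}\right)$ and $\rho(y)=\left(\begin{smallmatrix}1&0\\-s&1\end{smallmatrix}\right)$. Modulo $2$ every matrix equals its inverse up to sign, since $\rho(x)^{-1}\equiv\rho(x)$ and $\rho(y)^{-1}\equiv\rho(y)$. Hence the signs $\varepsilon_i$ disappear and $W\equiv M^{n}$ with $M=\rho(x)\rho(y)$. Here $\det M=1$ and $\operatorname{tr}M\equiv s$. By Cayley--Hamilton, $M^{n}=f_n(s)M-f_{n-1}(s)I$, where $f_n$ are the Chebyshev-type polynomials with $f_{n+1}=sf_n-f_{n-1}$. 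This gives $\varPhi(1,s)\equiv w_{11}\equiv (1+s)f_n+f_{n-1}$, which is $f_n+f_{n+1}$ up to sign.

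Now substitute $s=z+z^{-1}$. The recursion gives $f_m=(z^{m}-z^{-m})/(z-z^{-1})$, so the roots of this polynomial are exactly the values $z+z^{-1}$ with $z^{\alpha}=1$ and $z\neq1$. Over $\overline{\mathbb{F}}_2$ with $\alpha$ odd, these $\alpha-1$ roots of unity are distinct and pair up under $z\leftrightarrow z^{-1}$. They therefore give exactly $n$ distinct values of $s$, so the reduction of $\varPhi(1,s)$ mod $2$ is separable of degree $n$.

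Suppose $g^2\mid\varPhi(1,s)$ in $\mathbb{Z}[s]$ with $g$ nonconstant. Since the leading coefficient $\pm1$ survives reduction, $g$ is also monic up to sign and stays nonconstant mod $2$. This would give a repeated factor of a separable polynomial, which is a contradiction. If instead $g$ is a nonunit constant, it would divide the leading coefficient $\pm1$, which is impossible.

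\emph{General case.} Suppose $g^2\mid\varPhi$ in $R_0[s]$. If $\deg_s g=0$, then $g\in R_0$ divides the unit leading coefficient, so $g$ is a unit. If $\deg_s g\ge1$, the leading coefficient of $g$ divides $\pm t^{k}$ and is therefore a unit. Specializing $t=1$ preserves the degree of $g$, so $g(1,s)^2$ is a nontrivial repeated factor of $\varPhi(1,s)$, contradicting the parabolic case.
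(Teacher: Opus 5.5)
Your proof is correct, but there is no in-paper proof to compare it with: the paper states this lemma without proof and imports it from Riley. What you give is a complete, self-contained argument along the classical lines.

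At $t=1$ both generators are involutions modulo $2$, so the signs $\varepsilon_i$ disappear and $W$ reduces to the torus-knot word $(xy)^n$. Its $(1,1)$ entry is $f_n+f_{n+1}$, whose roots are the $n$ distinct values $z+z^{-1}$ with $z^{\alpha}=1$, $z\neq 1$. The lift from $\mathbb{Z}[s]$ to $R_0[s]$ uses only that the leading $s$-coefficient is a unit, so a square factor keeps its degree under $t=1$. I checked the Cayley--Hamilton step and the identity $f_n(z+z^{-1})+f_{n+1}(z+z^{-1})=z^{-n}(1+z+\cdots+z^{2n})$, and both are right.

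The preliminary degree fact you plan to re-derive is exactly Riley's Lemma~2, which the paper already records as Corollary~\ref{cor:highest-s-term} (leading coefficient $(-1)^m$), so you may simply cite it. If you do redo it, the cleanest bookkeeping is this: each pair $\rho(x)^{\varepsilon}\rho(y)^{\varepsilon'}$ has $s$-linear part $\left(\begin{smallmatrix}\pm1&0\\ u&0\end{smallmatrix}\right)s$ with $u$ a unit. A product of $n$ such rank-one matrices has $\pm1$ in the $(1,1)$ slot and $0$ in the $(1,2)$ slot.

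What your route buys beyond the citation is a stronger conclusion. Since $\varPhi(1,s)$ is monic up to sign and separable modulo $2$, its discriminant is odd, hence nonzero, so its complex roots are simple. That is precisely how the lemma is used in Lemma~\ref{lem:differentiability} (``its zeros are simple''). The statement as quoted, no repeated factors in $\mathbb{Z}[s]$, reaches that conclusion only after an extra appeal to Gauss's lemma and characteristic zero.
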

	The parabolic specialization will be used at the small-parameter boundary in the proof of the main theorem. We next turn to the real form and to the hyperbolic parameters used to define the slope function.
	\subsection{\texorpdfstring{The $\mathrm{SU}(1,1)$ parameterization}{The SU(1,1) parameterization}}
	Following Khoi~\cite{SU}, we work with $ \mathrm{SU}(1,1) $, a subgroup of $\mathrm{SL}(2,\mathbb{C})$ conjugate to $\mathrm{SL}(2,\mathbb{R})$ that admits a convenient parameterization for matrix calculations. In the unimodular normal form, $ \rho \left( x \right) =\left( \begin{smallmatrix}
		t^{1/2}&		1\\
		0&	t^{-1/2}\\
	\end{smallmatrix} \right)  $ and $ \rho \left( y \right) =\left( \begin{smallmatrix}
		t^{1/2}&		0\\
		-s&	t^{-1/2}\\
	\end{smallmatrix} \right) $. We use the notation
	\[
	\mathrm{SL}(2,\mathbb{R}) \xrightarrow{\psi}\mathrm{SU}(1,1) \xrightarrow{\varphi}\left( \gamma ,\omega \right)
	\]
		as follows. Let $ A=\left( \begin{smallmatrix}
			a&		b\\
			c&		d\\
		\end{smallmatrix} \right) \in \mathrm{SL}(2,\mathbb{R}) $, $ J=\left( \begin{smallmatrix}
		1&		-i\\
		1&		i\\
	\end{smallmatrix} \right)  $ and $
	\mathrm{SU}(1,1) =\left\{ \left( \begin{smallmatrix}
		\alpha&		\beta\\
		\overline{\beta }&		\overline{\alpha }\\
	\end{smallmatrix} \right) \left| \left| \alpha \right|^2-\left| \beta \right|^2=1 \right. \right\}
	$. The maps $ \psi $ and $ \varphi $ are defined as follows.

		\[\psi :A\longmapsto JAJ^{-1}  \quad  \text{by}    \quad  \left( \begin{matrix}
			a&		b\\
			c&		d\\
		\end{matrix} \right) \longmapsto \left( \begin{matrix}
			\frac{a+d+\left( b-c \right) i}{2}&		\frac{a-d-\left( b+c \right) i}{2}\\
			\frac{a-d+\left( b+c \right) i}{2}&		\frac{a+d-\left( b-c \right) i}{2}\\
		\end{matrix} \right),
	\]

	\[
	\varphi :\gamma =\frac{\overline{\beta }}{\alpha}, \omega =\arg(\alpha)\pmod{2\pi}.
	\]
	The map $\psi$ is a group isomorphism, and $\varphi$ gives coordinates on $\mathrm{SU}(1,1)$ with the multiplication rule stated below. Thus we may identify
	\[
	\mathrm{SU}(1,1) =\left\{ \left( \gamma ,\omega \right) \left| \left| \gamma \right|<1,-\pi \leqslant \omega <\pi \right. \right\}.\]
	The following multiplication rule will be used later. If
	$
	\left( \gamma ,\omega \right)  \left( \gamma ',\omega ' \right) =\left( \gamma '',\omega '' \right)
	$, then
	\[
	\gamma''=\frac{\gamma +\gamma'e^{-2i\omega}}{1+\overline{\gamma }\gamma'e^{-2i\omega}},
	\omega''=\omega +\omega'+\frac{1}{2i}\log \left\{ \frac{1+\overline{\gamma }\gamma'e^{-2i\omega}}{1+\gamma \overline{\gamma'}e^{2i\omega}} \right\}.
	\]
	In this formula, the logarithm is taken with a branch compatible with the chosen argument throughout the calculation. The equality for the $\omega$-coordinate is understood modulo $2\pi$; after computing it, we choose the representative in $[-\pi,\pi)$.

	We next record how these coordinates behave on hyperbolic elements.
	\begin{definition}
		A matrix $ M $ in $ \mathrm{SL}(2,\mathbb{R}) $ is hyperbolic if $ \left| \operatorname{tr}\left( M \right) \right|>2 $. Then $M$ is conjugate to a matrix of the form
		$\left( \begin{smallmatrix} r&0\\0&r^{-1} \end{smallmatrix} \right)$,
		where $ r\ne 0 $. An element of $\mathrm{PSL}(2,\mathbb{R})$ is hyperbolic if a lift to $\mathrm{SL}(2,\mathbb{R})$ is hyperbolic; this is well-defined because the two lifts differ only by sign.
	\end{definition}
	\begin{definition}
		Let $\Gamma$ be either $\pi_1(S^3\setminus\nu K)$ or the fundamental group of a Dehn filling of $K$, and let $\mu\in \Gamma$ be the image of the fixed knot meridian. A representation $\bar{\rho}:\Gamma\to \mathrm{PSL}(2,\mathbb{R})$ is called hyperbolic if $\bar{\rho}(\mu)$ is a hyperbolic element of $\mathrm{PSL}(2,\mathbb{R})$. Equivalently, any lift of $\bar{\rho}(\mu)$ to $\mathrm{SL}(2,\mathbb{R})$ has trace of absolute value greater than $2$.
	\end{definition}
		Let $ h=\varphi\circ\psi $. The following formulas follow immediately from $ \gamma =\frac{r-r^{-1}}{r+r^{-1}} $.
	\begin{equation}\label{eq:sl2r-parameterization}
		\begin{aligned}
			h\left( \left( \begin{smallmatrix}
				r&		0\\
				0&		r^{-1}\\
			\end{smallmatrix} \right) \right) & =\left( \tanh k,0 \right), \quad e^k=r,\ \text{if } r > 0, \\
			h \left( \left( \begin{smallmatrix}
				r&		0\\
				0&		r^{-1}\\
			\end{smallmatrix} \right) \right) & =\left( \tanh k,\pi \right), \quad e^k=-r,\ \text{if } r < 0.
		\end{aligned}
	\end{equation}
		Thus $k=\ln \left| r \right|$. In particular, $k>0$ when $\left| r \right|>1$, and $k<0$ when $\left| r \right|<1$. Moreover, taking powers has a particularly simple form: if
		$
		v=\left( \tanh c,n\pi \right)
		$
		with $n,\ell\in \mathbb{Z}$, then, with the angular coordinate understood modulo $2\pi$,
		\[
		v^\ell=\left( \tanh \ell c,\ell n\pi \right).
		\]
		Indeed, for angular coordinates in $\pi\mathbb{Z}$, the multiplication rule reduces in the first coordinate to the usual addition formula for $\tanh$.
	\subsection{Dehn surgery}
		We now express the Dehn-filling equation in terms of meridian and longitude translation parameters.

		We use the convention that $p/q$-surgery, with $q\neq0$, kills $\mu^p\lambda^q$. Thus the manifold obtained by Dehn surgery of finite slope $p/q$ on $K\subset S^3$ has fundamental group
		\[
		\pi_1\left(S^3_{p/q}(K)\right) = \langle x,y\mid wx=yw,\ \mu ^p\lambda ^q=1\rangle.
		\]
			Equivalently, the filling relation may be written as $\mu^p=\lambda^{-q}$. Suppose that $\bar{\rho}:\pi_1(S^3_{p/q}(K))\to\mathrm{PSL}(2,\mathbb{R})$ is a representation, and pull it back to the knot exterior $S^3\setminus\nu K$.
			We need an $\mathrm{SL}(2,\mathbb{R})$ lift of this exterior representation. The obstruction class is the pullback of the class of the central extension $1\to\{\pm I\}\to\mathrm{SL}(2,\mathbb{R})\to\mathrm{PSL}(2,\mathbb{R})\to1$.
			This obstruction vanishes because $H^2(S^3\setminus\nu K;\mathbb{Z}/2)=0$ for a knot exterior. After choosing a lift $\rho:\pi_1(S^3\setminus\nu K)\to\mathrm{SL}(2,\mathbb{R})$, the element $\mu^p\lambda^q$ has trivial image in $\mathrm{PSL}(2,\mathbb{R})$ and therefore satisfies $\rho(\mu^p\lambda^q)\in\{\pm I\}$.
			Thus the filling relation lifts to $\rho(\mu)^p\rho(\lambda)^q=\pm I$. If necessary, multiply the lift by the $\{\pm I\}$-valued character of the knot exterior that sends a meridian to $-I$; this does not change the projective representation and lets us choose the hyperbolic meridian lift to have positive eigenvalues. With this choice, Riley's normal form and Khoi's real $\mathrm{SU}(1,1)$ conditions apply to the lifted exterior representation~\cite[Theorem~1]{Riley}~\cite[Section~6]{SU}.

		To simplify the slope calculation, we conjugate the whole representation by a matrix that diagonalizes
		$ \rho \left( x \right) =\left(\begin{smallmatrix}
			\scriptstyle	t^{1/2}&	\scriptstyle	1\\
			\scriptstyle	0&	\scriptstyle t^{-1/2}\\
		\end{smallmatrix} \right)  $. For $t\neq \pm 1$, this is possible because $\rho(x)$ has the two distinct eigenvalues $t^{1/2}$ and $t^{-1/2}$. After this conjugation we may assume
		\[
		\rho(\mu)=\rho(x)=\left( \begin{matrix}
			t^{1/2}&0\\
			0&t^{-1/2}
		\end{matrix} \right).
		\]
		Since $\lambda$ commutes with $\mu$, the matrix $\rho(\lambda)$ preserves the one-dimensional eigenspaces of $\rho(\mu)$; hence, after the same conjugation, $\rho(\lambda)$ is diagonal as well.

		We may therefore arrange that
		\[
		\rho(\mu)=\left( \begin{matrix}
			t^{1/2}&		0\\
			0&		t^{-1/2}\\
		\end{matrix} \right),
		\qquad
		\rho(\lambda)=\left( \begin{matrix}
			\lambda _0&		0\\
			0&		\lambda _0^{-1}\\
		\end{matrix} \right),
		\]
		where $ \lambda_0 $ depends on $ t $ and $ s $. Using the parameterization~\eqref{eq:sl2r-parameterization}, write the signed diagonal parameters as
		\[
		h(\rho(\mu))=\left( \tanh \tilde a(s),0 \right),
		\qquad
		h(\rho(\lambda))=\left( \tanh \tilde b(s),\delta\pi\right),
		\]
		where $\delta=0$ or $1$ records the sign of $ \lambda_0 $. We fix the choices of signed parameters on each branch. Put $z=t+t^{-1}$, and write $z=z(s)$ on the chosen branch. With the above diagonal normalization, $t^{1/2}=e^{\tilde a(s)}$ and hence
		\begin{equation}
			f(s):=\frac{z(s)}{2}=\frac{t+t^{-1}}{2}=\cosh(2\tilde a(s)).
			\label{eq:f-definition}
		\end{equation}
		By Proposition~\ref{P1} and Lemma~\ref{lem:symmetric-laurent}, the Riley polynomial is a polynomial in $t+t^{-1}$ with coefficients in $\mathbb{Z}[s]$. Thus $z(s)$, and hence $f(s)$, is obtained from the Riley equation on the chosen algebraic branch. To express the longitude trace in the same variable, we use the following word-level symmetry.
		\begin{lemma}\label{lem:w-star-symmetry}
			Let $u=t^{1/2}$, and write
			\[
			x(u)=\left( \begin{smallmatrix}
				u&		1\\
				0&		u^{-1}
			\end{smallmatrix} \right),\qquad y(u)=\left( \begin{smallmatrix}
				u&		0\\
				-s&		u^{-1}
			\end{smallmatrix} \right),\qquad P=\left( \begin{smallmatrix}
				0&		1\\
				1&		0
			\end{smallmatrix} \right).
			\]
			Then for $\epsilon=\pm 1$ we have
			\[
			P\bigl(x(u)^{\epsilon}\bigr)^TP=x\left(u^{-1}\right)^{\epsilon},\qquad P\bigl(y(u)^{\epsilon}\bigr)^TP=y\left(u^{-1}\right)^{\epsilon}.
			\]
			Consequently,
			\[
			Pw(u)^TP=w^*\left(u^{-1}\right),\qquad P\bigl(w^*(u)\bigr)^TP=w\left(u^{-1}\right).
			\]
		\end{lemma}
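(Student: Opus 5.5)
The plan is to verify the two generator identities by direct matrix computation and then obtain the word identities from the fact that $M\mapsto PM^TP$ is an anti-automorphism.

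\textbf{Generators.} Since $P=P^{-1}$, the map $M\mapsto PM^TP$ is conjugation of the transpose. Conjugating by $P$ swaps both the two rows and the two columns, so
\[
P\begin{pmatrix} a&b\\ c&d\end{pmatrix}^{T}P
=P\begin{pmatrix} a&c\\ b&d\end{pmatrix}P
=\begin{pmatrix} d&b\\ c&a\end{pmatrix}.
\]
Thus the map swaps the diagonal entries and fixes the off-diagonal entries. Applied to $x(u)$ this gives $\left(\begin{smallmatrix} u^{-1}&1\\ 0&u\end{smallmatrix}\right)=x(u^{-1})$. Applied to $y(u)$ it gives $\left(\begin{smallmatrix} u^{-1}&0\\ -s&u\end{smallmatrix}\right)=y(u^{-1})$. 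This settles $\epsilon=1$.

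For $\epsilon=-1$, note that $M\mapsto PM^TP$ respects inverses: $(PM^TP)^{-1}=P(M^{-1})^TP$, because $P^{-1}=P$ and $(M^T)^{-1}=(M^{-1})^T$. So the $\epsilon=-1$ identities follow from the $\epsilon=1$ ones by taking inverses.

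\textbf{Words.} The map is an anti-homomorphism:
\[
P(AB)^TP=PB^TPPA^TP=(PB^TP)(PA^TP),
\]
using $P^2=I$. Write $w(u)=g_1g_2\cdots g_{\alpha-1}$ with $g_i=x(u)^{\varepsilon_i}$ for odd $i$ and $g_i=y(u)^{\varepsilon_i}$ for even $i$. Then
\[
Pw(u)^TP=(Pg_{\alpha-1}^TP)\cdots(Pg_1^TP).
\]
By the generator identities, each factor $Pg_i^TP$ is the same letter and exponent evaluated at $u^{-1}$. The reversed product is exactly the word $w^*=y^{\varepsilon_{\alpha-1}}x^{\varepsilon_{\alpha-2}}\cdots y^{\varepsilon_2}x^{\varepsilon_1}$ evaluated at $u^{-1}$. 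Hence $Pw(u)^TP=w^*(u^{-1})$.

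The second word identity is the same argument applied to $w^*$, whose reversal is $w$. Alternatively, apply the involution to the first identity with $u$ replaced by $u^{-1}$, using that $M\mapsto PM^TP$ squares to the identity.

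There is no real obstacle. The only point needing care is the order reversal of the anti-automorphism, which must be matched with the definition of $w^*$ as the reversed word with the same exponents attached to each letter.
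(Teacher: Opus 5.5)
Your proof is correct and follows essentially the same route as the paper: direct computation on the generators, the $\epsilon=-1$ case by taking inverses using $P^{-1}=P$ and $(Y^T)^{-1}=(Y^{-1})^T$, and order reversal under transpose to pass to the words $w$ and $w^*$. Your explicit formula for $PM^TP$ and the alternative derivation of the second identity via the involution $M\mapsto PM^TP$ are small additions; the argument is otherwise the paper's.
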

		\begin{proof}
			Direct computation gives
			\[
			Px(u)^TP=\left( \begin{smallmatrix}
				u^{-1}&		1\\
				0&		u
			\end{smallmatrix} \right)=x\left(u^{-1}\right),\qquad Py(u)^TP=\left( \begin{smallmatrix}
				u^{-1}&		0\\
				-s&		u
			\end{smallmatrix} \right)=y\left(u^{-1}\right).
			\]
			The inverse cases follow by taking inverses of both sides, using $P^{-1}=P$ and $(Y^T)^{-1}=(Y^{-1})^T$ for any invertible matrix $Y$. Since transpose reverses the order of a product, applying these identities successively to the word
			\[
			w(u)=x(u)^{\varepsilon_1}y(u)^{\varepsilon_2}\cdots x(u)^{\varepsilon_{\alpha-2}}y(u)^{\varepsilon_{\alpha-1}}
			\]
			gives
			\[
			Pw(u)^TP=y\left(u^{-1}\right)^{\varepsilon_{\alpha-1}}x\left(u^{-1}\right)^{\varepsilon_{\alpha-2}}\cdots y\left(u^{-1}\right)^{\varepsilon_2}x\left(u^{-1}\right)^{\varepsilon_1}=w^*\left(u^{-1}\right).
			\]
			The second identity is proved in the same way.
		\end{proof}

		Applying this to the longitude gives the corresponding trace symmetry. We use the capital $\Lambda$ for the matrix image of the longitude word, reserving the lowercase $\lambda$ for the group element.
		\begin{proposition}\label{prop:lambda-trace-symmetry}
			Let $\Lambda(t,s)$ denote the longitude matrix in the Riley representation, after writing the unimodular parameter as $t=u^2$. Then $\operatorname{tr}\Lambda(t,s)$ lies in $\mathbb{Z}[s][t,t^{-1}]$ and satisfies
			$
			\operatorname{tr}\Lambda\left(t,s\right)=\operatorname{tr}\Lambda\left(t^{-1},s\right).
			$
		\end{proposition}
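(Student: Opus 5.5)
The plan is to prove both assertions at the level of the parameter $u=t^{1/2}$, using two involutions of the matrices $x(u),y(u)$: the transpose-conjugation by $P$ from Lemma~\ref{lem:w-star-symmetry}, and a diagonal conjugation that handles the sign of $u$. Throughout, $\Lambda(u)=x(u)^{-2\sigma}\,w^*(u)\,w(u)$ is the image of $\lambda=x^{-2\sigma}w^*w$. Since $x(u)^{\pm1}$ and $y(u)^{\pm1}$ have entries in $\mathbb{Z}[s][u,u^{-1}]$, so does $\Lambda(u)$. A priori, then, $\operatorname{tr}\Lambda$ lies only in $\mathbb{Z}[s][u,u^{-1}]$.

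\emph{Integrality in $t$.} We must show that only even powers of $u$ occur. Let $D=\left(\begin{smallmatrix}1&0\\0&-1\end{smallmatrix}\right)$. A direct check gives
\[
Dx(u)D=\left(\begin{smallmatrix}u&-1\\0&u^{-1}\end{smallmatrix}\right)=-x(-u),\qquad Dy(u)D=\left(\begin{smallmatrix}u&0\\s&u^{-1}\end{smallmatrix}\right)=-y(-u).
\]
Taking inverses yields the same relations for the exponents $\epsilon=-1$. Hence for any word in $x,y$ with total exponent sum $e$, conjugation by $D$ turns its matrix at $u$ into $(-1)^e$ times its matrix at $-u$.

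For the longitude word, $w$ and $w^*$ each have exponent sum $\sigma$, and $x^{-2\sigma}$ contributes $-2\sigma$. The total is therefore $0$, so $D\Lambda(u)D=\Lambda(-u)$. Taking traces gives $\operatorname{tr}\Lambda(-u)=\operatorname{tr}\Lambda(u)$. Thus $\operatorname{tr}\Lambda$ involves only even powers of $u$, i.e.\ it lies in $\mathbb{Z}[s][t,t^{-1}]$. This is exactly why the correction factor $x^{-2\sigma}$ matters.

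\emph{Symmetry.} I would use $\operatorname{tr}(PM^TP)=\operatorname{tr}M$, which holds because $P^2=I$, together with the fact that transpose reverses products. Then
\[
P\Lambda(u)^TP=\bigl(Pw(u)^TP\bigr)\bigl(Pw^*(u)^TP\bigr)\bigl(Px(u)^TP\bigr)^{-2\sigma}.
\]
By Lemma~\ref{lem:w-star-symmetry}, the right-hand side equals $w^*(u^{-1})\,w(u^{-1})\,x(u^{-1})^{-2\sigma}$. By cyclicity of the trace, its trace is $\operatorname{tr}\bigl(x(u^{-1})^{-2\sigma}w^*(u^{-1})w(u^{-1})\bigr)=\operatorname{tr}\Lambda(u^{-1})$. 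Hence $\operatorname{tr}\Lambda(u)=\operatorname{tr}\Lambda(u^{-1})$, which in the variable $t=u^2$ reads $\operatorname{tr}\Lambda(t,s)=\operatorname{tr}\Lambda(t^{-1},s)$.

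The only delicate point is the parity step. One has to notice the sign identity with $D$ and check that the exponent sum of the longitude word vanishes. The symmetry itself is a formal consequence of Lemma~\ref{lem:w-star-symmetry} and cyclicity of the trace.
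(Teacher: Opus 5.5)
Your proof is correct and follows essentially the paper's route. The symmetry step is the same $P$-transpose computation via Lemma~\ref{lem:w-star-symmetry} together with cyclicity of the trace. Your identity $D\Lambda(u)D=\Lambda(-u)$ is a compact encoding of the paper's induction on the parity of $u$-degrees along an even-length word. One small correction: the factor $x^{-2\sigma}$ plays no role in the parity step. Only the exponent sum modulo $2$ matters, and $w^*w$ alone already has exponent sum $2\sigma$ (indeed $\sigma$ itself is even). So your closing remark that this is ``exactly why the correction factor matters'' is inaccurate, though harmless to the argument.
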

		\begin{proof}
			Let $u=t^{1/2}$, and write $\Lambda(u)$ for the longitude matrix after substituting $t=u^2$. First note that the trace contains only even powers of $u$. Indeed, each of $x(u)^{\pm1}$ and $y(u)^{\pm1}$ has diagonal entries with odd $u$-degree and off-diagonal entries with even $u$-degree, with zero entries ignored. By induction on word length, a product of an even number of such factors has diagonal entries involving only even powers of $u$. The longitude word $x^{-2\sigma}w^*w$ has even length after expanding powers, since $-2\sigma$ and $2(\alpha-1)$ are even. Hence $\operatorname{tr}\Lambda(u)$ belongs to $\mathbb{Z}[s][u^2,u^{-2}]$, and we may regard it as a Laurent polynomial $\operatorname{tr}\Lambda(t,s)$ in $t=u^2$.

			By Lemma~\ref{lem:w-star-symmetry},
			\[
			w^*\left(u^{-1}\right)=Pw(u)^TP,\qquad w\left(u^{-1}\right)=P\bigl(w^*(u)\bigr)^TP.
			\]
			The same conjugation-transpose identity is preserved under integral powers, because it is preserved under products and inverses. Hence
			\[
			x\left(u^{-1}\right)^{-2\sigma}=P\bigl(x(u)^{-2\sigma}\bigr)^TP.
			\]
			Therefore
			\begin{align*}
				\Lambda\left(u^{-1}\right)
				&=x\left(u^{-1}\right)^{-2\sigma}w^*\left(u^{-1}\right)w\left(u^{-1}\right)\\
				&=P\bigl(x(u)^{-2\sigma}\bigr)^TP\,Pw(u)^TP\,P\bigl(w^*(u)\bigr)^TP\\
				&=P\Bigl( x(u)^{-2\sigma}\Bigr)^Tw(u)^T\bigl(w^*(u)\bigr)^TP\\
				&=P\Bigl( w^*(u)w(u)x(u)^{-2\sigma}\Bigr)^TP.
			\end{align*}
			Taking traces and using invariance of trace under conjugation, transpose, and cyclic permutation, we get
			\[
			\operatorname{tr}\Lambda\left(u^{-1}\right)=\operatorname{tr}\Bigl( w^*(u)w(u)x(u)^{-2\sigma}\Bigr)=\operatorname{tr}\Bigl( x(u)^{-2\sigma}w^*(u)w(u)\Bigr)=\operatorname{tr}\Lambda(u).
			\]
			Since $u^2=t$, this is exactly
			\[
			\operatorname{tr}\Lambda\left(t,s\right)=\operatorname{tr}\Lambda\left(t^{-1},s\right).\qedhere
			\]
		\end{proof}

			Proposition~\ref{prop:lambda-trace-symmetry} and Lemma~\ref{lem:symmetric-laurent} show that $\operatorname{tr}\rho(\lambda)$ depends on $t$ only through $t+t^{-1}$. Hence, after choosing a branch of the Riley curve, the longitude trace is an algebraic function of the branch parameter $s$. We set
			\begin{equation}
			g(s)=\frac{|\operatorname{tr}\rho(\lambda)|}{2}=\frac{|\lambda_0+\lambda_0^{-1}|}{2}=\cosh \tilde b(s).
			\label{eq:g-definition}
			\end{equation}

			The absolute value in~\eqref{eq:g-definition} removes the possible central sign in the chosen $\mathrm{SL}(2,\mathbb{R})$ lift. Together with $f(s)$ from~\eqref{eq:f-definition}, it determines the nonnegative translation parameters
			\[
			A(s):=|\tilde a(s)|=\frac12\operatorname{arccosh} f(s),\qquad
			B(s):=|\tilde b(s)|=\operatorname{arccosh} g(s).
			\]
			We use the signed parameters only to read off the Dehn-filling relation.

			Indeed, if the $\mathrm{PSL}(2,\mathbb{R})$ representation extends over the $p/q$-filling, then the lifted exterior representation satisfies
			\[
			\rho(\mu)^p\rho(\lambda)^q=\pm I.
			\]
			Both central elements $\pm I$ have $\gamma=0$ in the $(\gamma,\omega)$-coordinates. Using the power formula above and the fact that the relevant angular coordinates lie in $\pi\mathbb{Z}$, the $\gamma$-coordinate of $\rho(\mu)^p\rho(\lambda)^q$ is $\tanh\bigl(p\tilde a(s)+q\tilde b(s)\bigr)$. Hence
			\[
			p\tilde a(s)=-q\tilde b(s).
			\]
			The angular congruences depending on the sign $\pm I$ are irrelevant for this necessary condition. Since $\tilde a(s)=0$ corresponds to a non-hyperbolic meridian, every hyperbolic representation extending over the $p/q$-filling with $q\neq0$ must satisfy
			\begin{equation}\label{slope}
			-\frac{p}{q}=\frac{\tilde b(s)}{\tilde a(s)}.
			\end{equation}

			Taking absolute values in~\eqref{slope} gives $|p/q|=B(s)/A(s)$. Hence the Dehn-filling problem is reduced to proving that this quotient is bounded on the relevant real branches. For the estimates below, we record its explicit form. On the hyperbolic locus considered here, $f(s)>1$ and $g(s)\ge1$, so equations~\eqref{eq:f-definition} and~\eqref{eq:g-definition} give
			\begin{small}
			\[
			\begin{aligned}
			\frac{B(s)}{A(s)}
			&=\frac{2\operatorname{arccosh} g(s)}{\operatorname{arccosh} f(s)} =2\frac{\log \left( g(s)+\sqrt{g(s)^2-1} \right)}{\log \left( f(s)+\sqrt{f(s)^2-1} \right)}.
			\end{aligned}
			\]
			\end{small}
			In the next section, we prove this boundedness; throughout that argument, $A$, $B$, $f$, and $g$ denote the corresponding functions on the branch under consideration.

	\section{Boundedness of the slope function}
	\subsection{Endpoint estimates and proof}
		We first express $f$ and $g$ through the common variable $z=t+t^{-1}$ and then analyze the boundary regimes of the resulting algebraic branches. The first input is Riley's degree estimate.
	\begin{lemma}[Riley~\cite{Riley}, Lemma~2]
		Expand the entries $w_{ij}$ of $W$ as
		\[
		W=\left( \begin{matrix}
			\sum_{\nu\ge0}{A_{\nu}\left( t \right) s^{\nu}}&		\sum_{\nu\ge0}{B_{\nu}\left( t \right) s^{\nu}}\\
			\sum_{\nu\ge0}{C_{\nu}\left( t \right) s^{\nu}}&		\sum_{\nu\ge0}{D_{\nu}\left( t \right) s^{\nu}}\\
		\end{matrix} \right)
		\]
		where $A_{\nu}(t),\ldots,D_{\nu}(t)\in\mathbb{Z}[t,t^{-1}]$. Write $ m=(\alpha-1)/2 $. Then
		\[
		\deg_s\left( w_{11} \right) =\deg_s\left( w_{21} \right) =m=1+\deg_s\left( w_{12} \right) =1+\deg_s\left( w_{22} \right).
		\]
		Write $ \eta =(1-\varepsilon _1)/2 $, and recall $ \sigma =\sum_{j=1}^{\alpha -1}{\varepsilon _j} $. Then the coefficients of $ s^{m} $ in $ w_{11} $ and $ w_{21} $ are
		\[
		A_m\left( t \right) =\left( -1 \right) ^mt^{\sigma/2}, C_m\left( t \right) =\left( -1 \right) ^m\varepsilon _1t^{\sigma/2 +\eta}.
		\]
		Let $e(t,s)$ be any one of $w_{11}$, $w_{12}$, $w_{21}$, $w_{22}$. Following Riley's notation, write its positive $s$-degree part as
		\[
		e\left( t,s \right)-e\left( t,0 \right)=\sum_{\nu=1}^n{s^{\nu}}\sum_{l=-L}^L{e_{l\nu}t^l},\qquad e_{l\nu}\in \mathbb{Z}
		\]
		where $ n=m $ or $ m-1 $. Let $ k=k(e) $ be the largest index such that $ e_{kn}\ne 0 $. Then

		$ e_{l\nu}=0 $ if $ l+\nu >n+k $, and $ k\left( w_{11} \right) \geqslant k\left( w_{12} \right) $ and $ k\left( w_{21} \right) \geqslant k\left( w_{22} \right) $.
	\end{lemma}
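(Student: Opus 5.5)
Since this statement is Riley's Lemma~2, I will prove it by induction on the length of the word $W$. The letters $\rho(x)^{\pm1}$ and $\rho(y)^{\pm1}$ are written in the non-unimodular normal form: $\rho(x)^{-1}=t^{-1}\left(\begin{smallmatrix}1&-1\\0&t\end{smallmatrix}\right)$ and $\rho(y)^{-1}=t^{-1}\left(\begin{smallmatrix}1&0\\ts&t\end{smallmatrix}\right)$. Each entry is then a Laurent polynomial in $t$ that is at most linear in $s$, and $s$ occurs only in the $(2,1)$ entry of the $y$-letters. Since $\varepsilon_i=\varepsilon_{\alpha-i}$ and the letters alternate $x,y$, I will group $W$ into $m=(\alpha-1)/2$ consecutive pairs $\rho(x)^{\varepsilon_{2j-1}}\rho(y)^{\varepsilon_{2j}}$. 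Each pair is a $2\times2$ matrix whose entries are affine in $s$, and the $s$-linear part is concentrated in its first column. This is because the $s$-dependence enters only through the right factor's $(2,1)$ entry, which multiplies the first column of the left factor.

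The degree statement will follow by tracking leading $s$-coefficients through the product of the $m$ pairs. For each pair, the $s$-coefficient is a rank-one matrix $v_j e_1^T$, where $v_j$ is the second column of $\rho(x)^{\varepsilon_{2j-1}}$ times the scalar $-t^{\varepsilon_{2j}\cdot(\ldots)}$ coming from $y^{\pm1}$. The product of the $s$-top parts is therefore $v_1(e_1^Tv_2)(e_1^Tv_3)\cdots(e_1^Tv_m)e_1^T$. This matrix has nonzero first column and zero second column. Hence $\deg_s w_{11}=\deg_s w_{21}=m$, while the second-column entries have degree at most $m-1$. Equality $\deg_s=m-1$ for $w_{12},w_{22}$ will come from the next-order term. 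I will compute it by replacing one factor by its $s^0$ part; the dominant contribution should come from the last factor, whose second column is nonzero.

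For the explicit coefficients, I evaluate the scalars directly. The first entry of the second column of $x^{\pm1}$ is $\pm t^{(\cdot)}$, and the $y^{\pm1}$ factor contributes $-t$ or $+1$, each times a power of $t$. Collecting the exponents gives the monomial $(-1)^m t^{\sigma/2}$ for $A_m$, and the extra factor $\varepsilon_1 t^{\eta}$ for $C_m$ comes from the second entry of $v_1$. The signs and exponents then reduce to the combinatorial count $\sum\varepsilon_j/2$.

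For the last claim, about the vanishing of $e_{l\nu}$ when $l+\nu>n+k$, I will introduce a weight grading in which $t$ has weight $1$ and $s$ has weight $1$, and show that the top-weight part of each entry is controlled by the top-$s$ coefficient. Concretely, I will substitute $s=\tau/t$ (or $s\mapsto s\,t^{-1}$), so that the bound becomes a statement about $t$-degrees of polynomials in $\tau$. Each letter then has entries whose $t$-degree is bounded by the same quantity for every $\tau$-power, and the bound is additive under products. The comparisons $k(w_{11})\ge k(w_{12})$ and $k(w_{21})\ge k(w_{22})$ will follow by comparing the top weight of the first and second columns in the final pair multiplication.

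I expect this weight bookkeeping, and especially the sharp comparison of $k$ between the columns, to be the main obstacle. I would first try to set it up as an induction on the number of pairs, using the invariant that column one has weight at least that of column two.
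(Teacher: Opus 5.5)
The paper does not prove this lemma; it quotes it from Riley, so your argument has to stand on its own. Your rank-one factorization does give the degree and leading-coefficient statements, but two checks need to be made explicit.

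\begin{itemize}
\item The first entries $e_1^Tv_j$ are $-t,\,1,\,1,\,-t^{-1}$ for $(\varepsilon_{2j-1},\varepsilon_{2j})=(1,1),(1,-1),(-1,1),(-1,-1)$. So the sign of $A_m$ is $(-1)^{\#\{j:\varepsilon_{2j-1}=\varepsilon_{2j}\}}$. It equals $(-1)^m$ only because $\varepsilon_i=\varepsilon_{\alpha-i}$ pairs each mixed pair $j$ with the mixed pair $m+1-j\neq j$, so there is an even number of mixed pairs.
\item Since $Q_me_2=0$, the $s^{m-1}$-coefficient of the second column is exactly $Q_1\cdots Q_{m-1}P_me_2$. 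What you need is that the $(1,2)$ entry of $P_m$ (which is $1$ or $-t^{-1}$) is nonzero, not merely that the second column is nonzero.
\end{itemize}

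The $k$-comparisons, which you flag as the main obstacle, are then immediate from these monomials: $k(w_{11})-k(w_{12})=k(w_{21})-k(w_{22})=(1+\varepsilon_{\alpha-1})/2\ge 0$.

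The genuine gap is the vanishing $e_{l\nu}=0$ for $l+\nu>n+k$, which is the real content of the lemma. First, your substitution runs the wrong way: $s=\tau/t$ turns $s^\nu t^l$ into $\tau^\nu t^{l-\nu}$, whereas you need $s=\tau t$.

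More seriously, letterwise degree bounds that add under products are far too coarse. The top weights of $x,y,x^{-1},y^{-1}$ are $1,2,0,1$, so additivity only gives $l+\nu\le 2m+\sigma/2$, whereas $n+k(w_{11})=m+\sigma/2$. Your proposed invariant (column one weighs at least as much as column two) is also insufficient, because the $(2,1)$ entry of a pair can exceed that pair's share by one.

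What works is an entrywise (tropical) induction over the pairs. Put $a_j=(\varepsilon_{2j-1}+\varepsilon_{2j})/2$ and $\eta_j=(1-\varepsilon_{2j-1})/2$. Measure monomial weights $l+\nu$ in the $j$-th pair relative to $1+a_j$, and in a product of pairs relative to the sum of these shares. The $j$-th pair then has relative weight matrix
\[
\begin{pmatrix}0&-(1+a_j+\eta_j)\\ \eta_j&-(1+a_j)\end{pmatrix}.
\]
By tropical submultiplicativity and induction on $r$, using $a_j+\eta_j=(1+\varepsilon_{2j})/2\ge 0$ and $\eta_j\le 1$, the product of the first $r$ pairs has relative weights entrywise at most
\[
\begin{pmatrix}0&-1\\ \eta_1&\eta_1-1\end{pmatrix}.
\]
The drop by one in the second column is exactly what absorbs the gain $\eta_j$ coming from the $(2,1)$ entries.

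One last multiplication by the final pair gives relative bounds
\[
0,\qquad \eta_1,\qquad -(1+a_m+\eta_m),\qquad \eta_1-(1+a_m+\eta_m)
\]
for $w_{11},w_{21},w_{12},w_{22}$ respectively. These coincide with $n+k-\sum_j(1+a_j)$ computed from your leading monomials. Without an argument of this kind, the last assertion of the lemma remains unproved.
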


		This degree estimate identifies the leading $s$-term of the normalized Riley polynomial.
	\begin{corollary}\label{cor:highest-s-term}
		The degree of $ \varPhi(t,s) $ as a polynomial in $ s $ is $ m $, and the coefficient of $ s^m $ is the constant Laurent polynomial $ (-1)^m $. Equivalently, the $ t $-constant coefficient of $ \varPhi(t,s) $ contains the term $ (-1)^m s^m $.
	\end{corollary}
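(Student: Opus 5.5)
We read off the top $s$-degree term of $\varPhi$ directly from Riley's Lemma~2. By definition
\[
\varPhi(t,s)=t^{-\sigma/2}\bigl[w_{11}+(1-t)w_{12}\bigr].
\]
By Lemma~2, $\deg_s w_{11}=m$, while $\deg_s w_{12}=m-1$. Multiplying $w_{12}$ by the Laurent polynomial $1-t$, which does not involve $s$, cannot raise its $s$-degree. Hence $(1-t)w_{12}$ contributes nothing in degree $s^m$.

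The coefficient of $s^m$ in $w_{11}+(1-t)w_{12}$ is therefore exactly $A_m(t)=(-1)^m t^{\sigma/2}$. Multiplying by $t^{-\sigma/2}$ gives the coefficient $(-1)^m$ of $s^m$ in $\varPhi$, and this is a constant Laurent polynomial.

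Since this coefficient is nonzero, $\deg_s\varPhi=m$ exactly. Nothing can appear in degree larger than $m$, because both summands have $s$-degree at most $m$.

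For the equivalent formulation, expand $\varPhi$ as $\sum_{l}t^{l}\,c_l(s)$ with $c_l(s)\in\mathbb{Z}[s]$. The coefficient of $s^m$ is the constant $(-1)^m$, so the only $t$-power carrying $s^m$ is $t^0$. Thus $c_0(s)$ contains the term $(-1)^m s^m$, and $c_l$ has degree less than $m$ for $l\neq0$.

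The only step requiring care is confirming that the degree statement $\deg_s w_{12}=m-1$ from Lemma~2 applies in the unimodular normal form used to define $\varPhi$, rather than only in the original form $\rho(x)=\left(\begin{smallmatrix}t&1\\0&1\end{smallmatrix}\right)$. The paper already asserts that the two forms have the same Riley polynomial. One sees this by rescaling each generator by $t^{-1/2}$ and conjugating by a diagonal matrix; both operations change entries by powers of $t^{1/2}$ and preserve $s$-degrees. This scaling is precisely what the normalizing factor $t^{-\sigma/2}$ absorbs. So the leading coefficient $A_m$ computed by Riley is the relevant one, and the argument goes through. I expect no real obstacle beyond this bookkeeping.
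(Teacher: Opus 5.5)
Your proof is correct and follows the paper's argument exactly: because $\deg_s w_{12}=m-1$, the $s^m$-coefficient of $w_{11}+(1-t)w_{12}$ comes only from $w_{11}$, and multiplying $A_m(t)=(-1)^m t^{\sigma/2}$ by $t^{-\sigma/2}$ gives the constant $(-1)^m$. Your closing bookkeeping paragraph is harmless but not needed, since $\varPhi$ is defined directly from the entries $w_{ij}$ of the original-form word $W$, and Riley's Lemma~2 applies to those entries as stated.
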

	\begin{proof}
		By Riley's degree estimate, $ \deg_s\left( w_{11} \right) = m $ and $ \deg_s\left( w_{12} \right) =m-1 $. Hence the $ s^m $-term of
			$
		\varPhi(t,s)=t^{-\sigma/2} \left[w_{11}+(1-t)w_{12}\right]
		$
			comes only from $ t^{-\sigma/2}w_{11} $. Since the coefficient of $ s^m $ in $ w_{11} $ is $ A_{m}\left(t\right)= (-1)^{m}t^{\sigma/2} $, the coefficient of $ s^m $ in $ \varPhi(t,s) $ is $ t^{-\sigma/2}A_{m}\left(t\right)= (-1)^{m} $. Therefore $ \deg_s(\varPhi)= m $, and the $ t $-constant term contains the monomial $ (-1)^{m}s^m $.
	\end{proof}
	Applying Proposition~\ref{P1} and Lemma~\ref{lem:symmetric-laurent} with $z=t+t^{-1}$, write the Riley equation as
	\begin{equation}\label{eq:riley-z-polynomial}
		\varPhi \left( t,s \right)=E_r(s)z^r+E_{r-1}(s)z^{r-1}+\cdots +E_0(s)=0.
	\end{equation}
	Set
	\[
	L(z,s)=E_r(s)z^r+E_{r-1}(s)z^{r-1}+\cdots+E_0(s),
	\]
	so that $L(z,s)=0$ is the Riley equation in the symmetric variable $z$.
	This degree information passes to the symmetric $z$-polynomial in the following form.
	\begin{corollary}\label{cor:E-degree}
		With the notation of~\eqref{eq:riley-z-polynomial}, one has
		\[
		\deg E_0(s)=m,\qquad \deg E_i(s)<m \quad \text{for every } i\ge 1.
		\]
		In particular, $ \deg E_0(s)>\deg E_i(s) $ for every $ i\ge 1 $.
	\end{corollary}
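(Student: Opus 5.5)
The plan is to read off the $z$-expansion one $s$-degree at a time, using Corollary~\ref{cor:highest-s-term} for the top degree. Expand the normalized Riley polynomial in powers of $s$ as
\[
\varPhi(t,s)=\sum_{\nu=0}^{m}\Phi_\nu(t)\,s^\nu,\qquad \Phi_\nu(t)\in\mathbb{Z}[t,t^{-1}].
\]
By Corollary~\ref{cor:highest-s-term}, the degree in $s$ is exactly $m$ and $\Phi_m(t)=(-1)^m$ is a constant Laurent polynomial.

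First I will check that each coefficient $\Phi_\nu$ is individually symmetric. Proposition~\ref{P1} gives $\varPhi(t,s)=\varPhi(t^{-1},s)$ as an identity in $\mathbb{Z}[s][t,t^{-1}]$. Comparing coefficients of $s^\nu$ then yields $\Phi_\nu(t)=\Phi_\nu(t^{-1})$ for every $\nu$. Lemma~\ref{lem:symmetric-laurent}, applied with $R=\mathbb{Z}$, therefore produces polynomials $P_\nu\in\mathbb{Z}[z]$ with $\Phi_\nu(t)=P_\nu(t+t^{-1})$.

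The one point needing care is that this representation is unique, so that the $E_i(s)$ in~\eqref{eq:riley-z-polynomial} are well defined and agree with the ones built from the $P_\nu$. Uniqueness holds because $z=t+t^{-1}$ is transcendental over $\mathbb{Q}$ inside $\mathbb{Q}(t)$. Equivalently, a nonzero polynomial of degree $n$ in $z$ becomes a Laurent polynomial whose top term is $t^n$ times its leading coefficient, so it cannot vanish. Hence the polynomial $L(z,s)$ in~\eqref{eq:riley-z-polynomial} is uniquely determined, and
\[
E_i(s)=\sum_{\nu=0}^{m}\bigl([z^i]P_\nu\bigr)\,s^\nu .
\]

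Now compare degrees. Since $\Phi_m(t)=(-1)^m$, the uniqueness above gives $P_m=(-1)^m$, a constant polynomial in $z$. Thus $[z^0]P_m=(-1)^m$ and $[z^i]P_m=0$ for every $i\ge1$. It follows that $E_0(s)$ has the term $(-1)^m s^m$ and no higher power of $s$, so $\deg E_0=m$. For $i\ge1$, only the indices $\nu\le m-1$ contribute to $E_i(s)$, so $\deg E_i\le m-1<m$; here we use the convention $\deg 0=-\infty$ when $E_i=0$. The final inequality $\deg E_0>\deg E_i$ for $i\ge 1$ follows immediately. I expect no real obstacle beyond stating the uniqueness of the $z$-representation explicitly.
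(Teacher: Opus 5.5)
Your proposal is correct and follows essentially the same argument as the paper. Both proofs rest on two facts: the $s^m$-coefficient of $\varPhi$ is the constant $(-1)^m$, and the powers of $z=t+t^{-1}$ are linearly independent in $\mathbb{Z}[t,t^{-1}]$. Expanding coefficient-by-coefficient in $s$ also makes explicit two points the paper uses tacitly when it examines only the $s^m$-coefficient: the $E_i$ are uniquely determined, and no $E_i$ has $s$-degree exceeding $m$.
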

	\begin{proof}
		By Corollary~\ref{cor:highest-s-term}, the coefficient of $s^m$ in $\varPhi(t,s)$ is the constant Laurent polynomial $(-1)^m$. Let $c_i$ be the coefficient of $s^m$ in $E_i(s)$. Then the coefficient of $s^m$ in the right-hand side of~\eqref{eq:riley-z-polynomial} is
		\[
		c_rz^r+c_{r-1}z^{r-1}+\cdots+c_0.
		\]
		The polynomials $1,z,\ldots,z^r$ are linearly independent as Laurent polynomials in $t$, since each $z^j=(t+t^{-1})^j$ has highest $t$-degree $j$. Hence this expression can equal the constant $(-1)^m$ only if $c_i=0$ for every $i\ge 1$ and $c_0=(-1)^m$. Therefore $E_0(s)$ has degree $m$, while each $E_i(s)$ with $i\ge 1$ has degree strictly less than $m$.
	\end{proof}

		The same symmetric variable also controls the longitude trace. Since $\operatorname{tr}\Lambda(t,s)$ is a Laurent polynomial in $t$ with coefficients in $\mathbb{Z}[s]$ and is symmetric by Proposition~\ref{prop:lambda-trace-symmetry}, the half-trace can be written in the form
		\[
		\frac{\operatorname{tr}\Lambda(t,s)}{2}=Q(t+t^{-1},s),\qquad Q(z,s)\in \frac12\mathbb{Z}[z,s].
		\]
	Let $\mathcal A$ be the semialgebraic set of real pairs $(s,z)$ satisfying
		\[
		L(z,s)=0,\qquad z>2,\qquad |Q(z,s)|\ge 1.
		\]
		Every hyperbolic real representation considered above gives a point of $\mathcal A$: the equation $L=0$ is Riley's equation, $z>2$ is the hyperbolic meridian condition, and $|Q|\ge1$ follows because the longitude commutes with the hyperbolic meridian. Let $\Omega:=\operatorname{pr}_s(\mathcal A)$. Since $\mathcal A$ is semialgebraic, choose a finite semialgebraic decomposition compatible with the projection to the $s$-line~\cite[Theorem~2.3.1 and Definition~2.3.4]{BCR}.

	\smallskip
	\noindent\emph{Reduction to branch intervals.}
	The non-vertical one-dimensional cells give open intervals, each carrying continuous algebraic branches $z(s)$, $f(s)=z(s)/2$, and $g(s)$; denote this finite family of intervals by $\mathcal I$. On such an interval, $|Q(z(s),s)|=g(s)\ge1$, so $Q(z(s),s)$ never vanishes and has a constant sign. After multiplying $Q$ by this sign on the interval, we may write, for some fixed nonnegative integer $\kappa$ depending on that interval,
	\begin{equation}\label{eq:g-z-polynomial}
	g(s)=H_\kappa(s)z(s)^\kappa+H_{\kappa-1}(s)z(s)^{\kappa-1}+\cdots+H_1(s)z(s)+H_0(s).
	\end{equation}
	Here each $H_i(s)$ lies in $\mathbb{Q}[s]$. Thus, on each fixed interval, $f$ and $g$ are controlled by the same algebraic branch $z=z(s)$. The chosen cell decomposition separates the branches: self-intersections and branch changes are confined to finitely many endpoints or vertical cells. Hence all endpoint estimates below are one-sided estimates along a fixed branch.

	We now dispose of the exceptional cells. Zero-dimensional cells contribute only finitely many values with $A>0$. A vertical cell lies over a fixed value $s=s_0$, with $z$ varying in a connected subset of $(2,\infty)$. On such a cell, $f=z/2$ and, after choosing the sign of $Q$ on the cell, $g=Q(z,s_0)$ is a rational-coefficient polynomial in $z$ with $g\ge1$. Therefore $B/A$ is bounded on compact $z$-subintervals. If $z\to\infty$, polynomial growth of $g$ gives the same logarithmic bound as in the unbounded-end case below.

	It remains only to consider a vertical-cell end with $z\to2^+$. Then $(2,s_0)$ lies in the cell closure; at $t=1$ the limiting meridian is nontrivial parabolic, and the limiting longitude commutes with it, so its trace is $\pm2$. After the sign choice for $Q$, this gives $Q(2,s_0)=1$. Hence $g-1=Q(z,s_0)-Q(2,s_0)=O(z-2)=O(f-1)$, and $\operatorname{arccosh}x\sim\sqrt{2(x-1)}$ gives boundedness of $B/A$. It remains to bound $B/A$ on the intervals in $\mathcal I$.
			\begin{lemma}\label{lem:boundary-behavior}
				Let $I$ be a branch interval in $\mathcal I$, and use the corresponding continuous branches of $f$ and $g$ on $I$.
				\begin{enumerate}[label=\textup{(\Roman*)}]
					\item If $I$ is unbounded and $|s|\to \infty$ in $I$, then $f(s)\to \infty$. Consequently, $A(s)\to \infty$.
					\item Let $s_0\in \overline{I}\setminus I$ be finite. If $f(s)\to 1$ as $s\to s_0$ in $I$, then $g(s)\to 1$. Consequently, $B(s)\to 0$ whenever $A(s)\to 0$.
					\item Let $s_0\in \overline{I}\setminus I$ be finite. If $g(s)\to \infty$ as $s\to s_0$ in $I$, then $f(s)\to \infty$. Consequently, $A(s)\to \infty$ whenever $B(s)\to \infty$.
				\end{enumerate}
			\end{lemma}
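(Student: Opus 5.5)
For (I) I would argue by contradiction using the Riley equation in the symmetric variable, $L(z(s),s)=0$ on $I$, together with Corollary~\ref{cor:E-degree}. Suppose $|s|\to\infty$ in $I$ but $f(s)=z(s)/2$ does not tend to infinity. Then there is a sequence $s_n\in I$ with $|s_n|\to\infty$ and $2<z(s_n)\le M$ for some constant $M$. Rewrite the equation as
\[
E_0(s_n)=-\sum_{i=1}^{r}E_i(s_n)z(s_n)^i .
\]
By Corollary~\ref{cor:E-degree}, the left side has absolute value $\ge c|s_n|^m$ for some $c>0$ and large $n$, since $\deg E_0=m$. The right side is bounded by $C(1+M^r)|s_n|^{m-1}$, since every $E_i$ with $i\ge1$ has degree at most $m-1$. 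This is a contradiction for large $n$, so $z(s)\to\infty$ and hence $f(s)\to\infty$. Since $A=\tfrac12\operatorname{arccosh} f$ is increasing and unbounded, $A(s)\to\infty$.

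For (III) I would use the same kind of argument, now with $s$ bounded. Near the finite endpoint $s_0$, the variable $s$ stays in a compact set, so each $H_i(s)$ in~\eqref{eq:g-z-polynomial} is bounded. Suppose $g(s)\to\infty$ but $z(s)$ does not tend to infinity. Then some sequence $s_n\to s_0$ has $2<z(s_n)\le M$. Along it, the right side of~\eqref{eq:g-z-polynomial} is bounded, contradicting $g(s_n)\to\infty$. Hence $z\to\infty$, so $f\to\infty$ and $A\to\infty$. Equivalently, one can use $g=|Q(z(s),s)|$ with $Q$ a polynomial, which avoids any dependence on the sign normalization.

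Part (II) is where I expect the main difficulty, since it needs representation-theoretic input rather than only degree bookkeeping. Assume $f(s)\to1$, i.e.\ $z(s)\to2$, as $s\to s_0$. Since $(z(s),s)\to(2,s_0)$ and $Q$ is a polynomial, we get $g(s)=|Q(z(s),s)|\to|Q(2,s_0)|$, and by continuity of $L$ also $L(2,s_0)=0$. It remains to show $|Q(2,s_0)|=1$.

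I would work with the Riley matrices at $t=1$, i.e.\ $u=1$, and $s=s_0$. Both sides of $W\rho(x)=\rho(y)W$ have entries polynomial in $u^{\pm1}$ and $s$. Riley's theorem shows that these entries vanish on the zero set of the Riley polynomial (in $t$, or equivalently in $u$ with $t=u^2$). Choose $u(s)=t(s)^{1/2}$ to be the root with $u\to1$. Passing to the limit, the matrices $x(1)$ and $y(1)$ with parameter $s_0$ still satisfy the group relation, and hence so does the peripheral commutation $\Lambda x=x\Lambda$, which is a consequence of the relator. Now $x(1)=\left(\begin{smallmatrix}1&1\\0&1\end{smallmatrix}\right)$ is a nontrivial parabolic. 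Any matrix in $\mathrm{SL}(2,\mathbb{C})$ commuting with it has the form $\pm\left(\begin{smallmatrix}1&b\\0&1\end{smallmatrix}\right)$, so its trace is $\pm2$. Moreover $\operatorname{tr}\Lambda$ at $t=1$ equals $2Q(2,s_0)$, because $Q(t+t^{-1},s)=\operatorname{tr}\Lambda(t,s)/2$ is an identity of Laurent polynomials. Hence $|Q(2,s_0)|=1$ and $g(s)\to1$.

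Finally, both $f\to1$ and $g\to1$ make $A=\tfrac12\operatorname{arccosh} f\to0$ and $B=\operatorname{arccosh} g\to0$. The point to check carefully is this passage to the limit in the relation $W\rho(x)=\rho(y)W$. It is a polynomial identity that holds at $(t,s)$ on the branch, and the branch converges to $(1,s_0)$, so the identity persists at the limit. I do not need the limit representation to be non-abelian for this step.
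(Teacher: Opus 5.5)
Your proposal is correct and follows the paper's proof essentially step for step. You use the same degree comparison from Corollary~\ref{cor:E-degree} for (I), the same boundedness of the $H_i$ near a finite endpoint for (III), and for (II) the same parabolic limit $t\to1$ in the Riley normal form, where the limiting longitude commutes with $\left(\begin{smallmatrix}1&1\\0&1\end{smallmatrix}\right)$ and therefore has trace $\pm2$. The only cosmetic difference is that you pass the group relation (together with continuity of the polynomial $Q$) to the limit, whereas the paper passes the commutation $x\lambda=\lambda x$ directly; the two are equivalent.
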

			\begin{proof}
				\noindent(I) Write $z=z(s)$ for the corresponding root of~\eqref{eq:riley-z-polynomial}, and let $|s|\to\infty$ in $I$. By Corollary~\ref{cor:E-degree}, $\deg E_0=m$ and $\deg E_i<m$ for every $i\ge 1$. If $z$ were bounded along a sequence with $|s|\to\infty$, then
			\[
			E_r(s)z^r+\cdots+E_1(s)z=O(|s|^{m-1}),
			\]
			whereas
			\[
			E_0(s)=(-1)^m s^m+O(|s|^{m-1}).
			\]
			Hence the left-hand side of~\eqref{eq:riley-z-polynomial} would be $(-1)^m s^m+O(|s|^{m-1})$, which cannot vanish for large $|s|$. Thus $|z|\to\infty$ along the branch. Since $z=2f(s)>2$ on the hyperbolic branch, we get $z\to+\infty$. Therefore $f(s)=z/2\to\infty$, and consequently $A(s)\to\infty$.

				\noindent(II) Since $f(s)=(t+t^{-1})/2$ on the chosen branch and $t>0$ in the hyperbolic case, the condition $f(s)\to 1$ implies $t\to 1$. The diagonalization used above is a conjugation of the whole representation for $t\ne1$, so it does not change the longitude trace. We may therefore compute this trace in the non-diagonal unimodular normal form, which has a well-defined parabolic limit at $t=1$. In this form the meridian tends to the parabolic matrix
			\[
			x\rightarrow \left( \begin{smallmatrix}
				1&		1\\
				0&		1
			\end{smallmatrix} \right).
			\]
			Because $s_0$ is finite, the other generator also has a finite limit. Hence the longitude $ \lambda=x^{-2\sigma}w^*w $, being a fixed word in $x^{\pm1}$ and $y^{\pm1}$, has a finite limit. Moreover, $x\lambda=\lambda x$ for every $s$, so the limiting longitude commutes with this nontrivial unipotent matrix. Its centralizer in $\mathrm{SL}(2,\mathbb{C})$ consists of the matrices
			\[
			\left( \begin{smallmatrix}
				\epsilon&		c\\
				0&		\epsilon
			\end{smallmatrix} \right),\qquad \epsilon=\pm 1,\quad c\in\mathbb{C}.
			\]
			Thus $|\operatorname{tr}\rho(\lambda)|/2\rightarrow 1$, and consequently $g(s)\rightarrow 1$. Since $f(s)=\cosh(2\tilde a(s))$ and $g(s)=\cosh \tilde b(s)$ by definition, this also gives the stated implication $A(s)\to 0 \Rightarrow B(s)\to 0$.

			\noindent(III) In~\eqref{eq:g-z-polynomial}, the coefficients $H_i(s)$ are rational-coefficient polynomials, hence remain bounded as $s\to s_0$. Since $g(s)\to\infty$, the branch value $z$ cannot have a bounded subsequence. Therefore $|z|\to\infty$. On the hyperbolic branch, $z=2f(s)>2$, so $z\to+\infty$, and hence $f(s)\to\infty$.
		\end{proof}

		The small-parameter boundary also requires the following derivative estimate.
		\begin{lemma}\label{lem:differentiability}
			Let $I$ be a branch interval in $\mathcal I$, and let $s_0\in \overline{I}\setminus I$ be finite. Suppose $f(s)\to 1$ as $s\to s_0$ in $I$, so that we are in case~(II) of Lemma~\ref{lem:boundary-behavior}. Extend $f$ and $g$ to $s_0$ by their limiting values. If $f$ has a one-sided derivative at $s_0$ along $I$, then $g$ also has a one-sided derivative at $s_0$ along $I$. Moreover, $f'(s_0)\neq 0$.
		\end{lemma}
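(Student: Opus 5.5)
The plan is to work near the point $(z,s)=(2,s_0)$ of the real Riley curve $L(z,s)=0$, using $z=2f$ as the basic coordinate. The relevant inputs are Lemma~\ref{P2}, which says that $\varPhi(1,s)$ has no repeated factors in $\mathbb{Z}[s]$, and the polynomial expression~\eqref{eq:g-z-polynomial} for $g$ in terms of $z$ and $s$.

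\emph{Step 1: derivative of $g$.} On $I$ we have $g(s)=\sum_i H_i(s)z(s)^i$ with $H_i\in\mathbb{Q}[s]$. By case~(II) of Lemma~\ref{lem:boundary-behavior}, $z(s)\to 2$ as $s\to s_0$. Suppose $f$, and hence $z=2f$, has a one-sided derivative at $s_0$. Then $g$ is a polynomial expression in the one-sided differentiable function $z$ and the smooth functions $H_i$, so it also has a one-sided derivative at $s_0$. By the chain rule this derivative is
\[
g'(s_0)=\sum_i\bigl(H_i'(s_0)2^i+iH_i(s_0)2^{i-1}z'(s_0)\bigr).
\]
This step only needs the extended value $g(s_0)=1$, which case~(II) supplies. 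Continuity of the sign choice for $Q$ makes it consistent with~\eqref{eq:g-z-polynomial}.

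\emph{Step 2: $f'(s_0)\neq0$.} First, $L(2,s_0)=0$ by continuity. The specialization $z=2$ corresponds to $t=1$, so $L(2,s)=\varPhi(1,s)$ and $s_0$ is a root of $\varPhi(1,s)$. By Lemma~\ref{P2}, $\varPhi(1,s)$ has no repeated factors in $\mathbb{Z}[s]$. Hence $s_0$ is a simple root, and $\partial_s L(2,s_0)\neq0$.

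Next, I differentiate the identity $L(z(s),s)=0$ one-sidedly at $s_0$. Here I write $z(s)=2+z'(s_0)(s-s_0)+o(s-s_0)$ and expand $L$ by Taylor's theorem at $(2,s_0)$. This gives
\[
\partial_zL(2,s_0)\,z'(s_0)+\partial_sL(2,s_0)=0.
\]
If $z'(s_0)=0$, this would force $\partial_sL(2,s_0)=0$, which contradicts simplicity of the root. Therefore $z'(s_0)\neq0$, and so $f'(s_0)=z'(s_0)/2\neq0$.

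I expect the main obstacle to be justifying that $L(2,s)$ really equals $\varPhi(1,s)$ up to a unit, so that Lemma~\ref{P2} applies to it. This follows because $\varPhi(t,s)=L(t+t^{-1},s)$ identically, so substituting $t=1$ is legitimate. A second point to check is that the finite-$s_0$ hypothesis keeps all the Taylor expansions valid; this holds because $L$ is a polynomial.
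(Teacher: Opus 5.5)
Your proposal is correct and follows the paper's proof essentially step for step. You differentiate $L(z(s),s)=0$ one-sidedly at $(2,s_0)$. You use Lemma~\ref{P2} to get $L_s(2,s_0)=\frac{d}{ds}\varPhi(1,s)\big|_{s=s_0}\neq0$, conclude $z'(s_0)\neq0$, and obtain the one-sided derivative of $g$ from~\eqref{eq:g-z-polynomial}. The only cosmetic difference is that you rule out $z'(s_0)=0$ directly, whereas the paper first deduces $L_z(2,s_0)\neq0$ and then writes $z'(s_0)=-L_s(2,s_0)/L_z(2,s_0)$.
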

		\begin{proof}
			Recall that $L(z(s),s)=0$ along the chosen branch. Since $z(s)=2f(s)$ and $f$ is one-sided differentiable at $s_0$, the branch $z(s)$ is one-sided differentiable at $s_0$ and $z(s)\to 2$. Taking the one-sided derivative at $s_0$ in the identity $L(z(s),s)=0$ gives
			\[
			L_z(2,s_0)z'(s_0)+L_s(2,s_0)=0.
			\]
			Let $R(s)=L(2,s)=\varPhi(1,s)$, where the equality follows by setting $t=1$ and hence $z=t+t^{-1}=2$. Since $f(s)\to 1$, we have $R(s_0)=L(2,s_0)=0$. Moreover,
			\[
			R'(s_0)=L_s(2,s_0).
			\]
			By Lemma~\ref{P2}, the polynomial $R=\varPhi(1,s)$ has no repeated factors in $\mathbb{Z}[s]$, which means that its zeros are simple. Since $R(s_0)=0$, we have $R'(s_0)\neq 0$. Hence $L_s(2,s_0)\neq 0$.

			If $L_z(2,s_0)=0$, then the differentiated identity at $s_0$ would imply $L_s(2,s_0)=0$, a contradiction. Therefore $L_z(2,s_0)\neq 0$, and
			\[
			z'(s_0)=-\frac{L_s(2,s_0)}{L_z(2,s_0)}\neq 0.
			\]
			Thus $f'(s_0)=z'(s_0)/2\neq 0$.

			Finally, equation~\eqref{eq:g-z-polynomial} expresses $g(s)$ as a polynomial in $z(s)$ with polynomial coefficients in $s$. Since $s_0$ is finite and $z(s)$ is one-sided differentiable at $s_0$, this expression is one-sided differentiable at $s_0$. Hence $g$ is one-sided differentiable at $s_0$.
		\end{proof}

		The next elementary lemma handles endpoints where both translation lengths tend to zero.
	\begin{lemma}\label{lem:small-parameter-ratio}
		Let $I$ be a branch interval in $\mathcal I$, and let $s_0\in\overline{I}\setminus I$ be finite. Suppose that $A(s),B(s)\to 0$ as $s\to s_0$ within $I$. If $\frac{g(s)-1}{f(s)-1}$ is bounded near $s_0$ along $I$, then $\frac{B(s)}{A(s)}$ is also bounded near $s_0$ along $I$.
	\end{lemma}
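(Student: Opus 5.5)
The lemma is elementary real analysis: compare each translation parameter with the square root of the corresponding cosh-excess. Recall that $A(s)=\tfrac12\operatorname{arccosh} f(s)$ and $B(s)=\operatorname{arccosh} g(s)$, with $f(s)>1$ and $g(s)\ge1$ on $I$. The hypothesis $A(s),B(s)\to0$ is equivalent to $f(s)\to1$ and $g(s)\to1$ as $s\to s_0$ within $I$. The plan is to replace $\operatorname{arccosh}$ near $1$ by its square-root asymptotics and reduce the ratio $B/A$ to a square root of the ratio $(g-1)/(f-1)$.

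\textbf{Step 1: uniform comparison for $\operatorname{arccosh}$ near $1$.} Set $\phi(x)=\operatorname{arccosh}(x)/\sqrt{2(x-1)}$ for $x>1$. From $\cosh y = 1+ y^2/2 + O(y^4)$, i.e.\ $\cosh y-1=2\sinh^2(y/2)$, one gets $\phi(x)\to1$ as $x\to1^+$. Hence there is $\delta>0$ such that
\[
\tfrac12\sqrt{2(x-1)}\le \operatorname{arccosh}x\le 2\sqrt{2(x-1)}\qquad\text{for }1\le x\le 1+\delta.
\]
A concrete version is available without limits: with $y=\operatorname{arccosh}x\ge0$ we have $x-1=2\sinh^2(y/2)$, so $\sqrt{2(x-1)}=2\sinh(y/2)$. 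Since $y\le 2\sinh(y/2)\le y\cosh(y/2)$, the two quantities agree up to the factor $\cosh(y/2)$, which is at most $2$ for small $y$.

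\textbf{Step 2: apply the comparison to $A$ and $B$.} Choose a one-sided neighbourhood of $s_0$ in $I$ on which $f(s)-1$ and $g(s)-1$ are both less than $\delta$; this is possible by the convergence in the hypothesis. There, $A(s)\ge \tfrac14\sqrt{2(f(s)-1)}>0$ and $B(s)\le 2\sqrt{2(g(s)-1)}$. Therefore
\[
\frac{B(s)}{A(s)}\le 8\sqrt{\frac{g(s)-1}{f(s)-1}}.
\]
This is bounded near $s_0$ along $I$ by the assumption on $(g-1)/(f-1)$. If $g(s)=1$ at some points, then $B(s)=0$ there and the bound holds trivially.

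The only delicate point is the uniformity of the asymptotic comparison in Step 1, since $A$ appears in the denominator. This is settled by the explicit $\sinh$ identity rather than by a bare $\sim$ statement. The argument is otherwise routine, and the substantive input for later use is verifying the hypothesis on $(g-1)/(f-1)$, which Lemma~\ref{lem:differentiability} is designed to supply via $f'(s_0)\neq0$.
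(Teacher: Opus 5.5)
Your proposal is correct and is essentially the paper's proof: both bound $\operatorname{arccosh}$ near $1$ above and below by constant multiples of $\sqrt{x-1}$, and both conclude $B/A\le C\sqrt{(g-1)/(f-1)}$. The identity $x-1=2\sinh^2(y/2)$ only makes the paper's constants $c_1,c_2$ explicit; in fact it gives $\operatorname{arccosh}x\le\sqrt{2(x-1)}$ for every $x\ge1$.
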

	\begin{proof}
		By Lemma~\ref{lem:boundary-behavior}(II), we have $f(s)\to 1$ and $g(s)\to 1$, with $f(s)>1$ and $g(s)\ge1$ on the branch. The standard expansion $\operatorname{arccosh}(x)\sim \sqrt{2(x-1)}$ as $x\to 1^+$ gives constants $c_1,c_2>0$ and a neighborhood of $s_0$ such that
		\[
		\operatorname{arccosh} f(s)\ge c_1\sqrt{f(s)-1},
		\qquad
		\operatorname{arccosh} g(s)\le c_2\sqrt{g(s)-1}
		\]
		there. Hence
		\[
		\frac{B(s)}{A(s)}=2\frac{\operatorname{arccosh} g(s)}{\operatorname{arccosh} f(s)}
		\le \frac{2c_2}{c_1}\sqrt{\frac{g(s)-1}{f(s)-1}},
		\]
		and boundedness of $\frac{g(s)-1}{f(s)-1}$ implies boundedness of $\frac{B(s)}{A(s)}$ near $s_0$.
	\end{proof}

		With these preparations in place, we prove the main boundedness theorem.
	\begin{proof}[Proof of Theorem~\ref{thm:main}]
		By~\eqref{slope}, it suffices to prove that $B(s)/A(s)$ is bounded on the admissible real branches. Fix a branch interval $I$ in $\mathcal I$. On every compact subinterval of $I$, the functions $A(s)$ and $B(s)$ are continuous and $A(s)>0$, so $B(s)/A(s)$ is bounded. It remains to prove boundedness near each finite endpoint of $I$ and along each unbounded end of $I$; otherwise any unbounded sequence for the quotient would have a subsequence of one of these types. Since $f$ and $g$ are semialgebraic on the one-dimensional cell, they have one-sided limits in $[1,\infty]$ at every such end; hence $A$ and $B$ have one-sided limits in $[0,\infty]$ as well. The cases below therefore exhaust the possible endpoint behaviors.

		\noindent\textup{(I) Unbounded ends.} Suppose $I$ is unbounded and $|s|\to\infty$ along an end of $I$. By Lemma~\ref{lem:boundary-behavior}(I), $f(s)\to\infty$ and hence $A(s)\to\infty$. If $B(s)$ remains bounded, then $B(s)/A(s)\to 0$. Thus the only nontrivial case is $B(s)\to\infty$. In that case $g(s)=\cosh B(s)\to\infty$, and
		\[
		\frac{B(s)}{A(s)}=2\frac{\log\left(g+\sqrt{g^2-1}\right)}{\log\left(f+\sqrt{f^2-1}\right)}.
		\]
		Because $f$ and $g$ are semialgebraic functions on the chosen one-sided branch, their germs at the end are represented by algebraic Puiseux series~\cite[Example~7.3.6 and Proposition~8.1.12]{BCR}. Thus $f(s)=c|s|^{d_f}(1+o(1))$ for some $c>0$ and rational $d_f>0$, while $g(s)=O(|s|^{d_g})$ for some $d_g>0$. Hence
		\[
		\log\left(f+\sqrt{f^2-1}\right)=d_f\log|s|+O(1),
		\]
		where the denominator is eventually positive and unbounded. Since $g\ge1$ and $g=O(|s|^{d_g})$, the elementary estimate $\operatorname{arccosh}x=\log(x+\sqrt{x^2-1})\le \log(2x)$ for $x\ge1$ gives
		\[
		\log\left(g+\sqrt{g^2-1}\right)=O(\log|s|).
		\]
		Therefore
		\[
		\limsup_{|s|\to\infty}\frac{B(s)}{A(s)}<\infty.
		\]

		\noindent\textup{(II) Finite endpoints with large parameters.} Let $s_0\in\overline{I}\setminus I$ be finite, and suppose $A(s),B(s)\to\infty$ as $s\to s_0$ within $I$. Then $g(s)=\cosh B(s)\to\infty$, so Lemma~\ref{lem:boundary-behavior}(III) gives $f(s)\to\infty$ as well. We reduce this finite endpoint to the preceding unbounded-end argument by setting
		\[
		u=(s-s_0)^{-1}
		\]
		and writing the corresponding branches as $\check f(u)=f(s_0+u^{-1})$ and $\check g(u)=g(s_0+u^{-1})$. These are still semialgebraic functions of $u$, and $\check f(u),\check g(u)\to\infty$ as $|u|\to\infty$ along the transformed one-sided branch. By the same Puiseux description of one-sided semialgebraic function germs~\cite[Example~7.3.6 and Proposition~8.1.12]{BCR}, $\check f(u)=c|u|^{d_f}(1+o(1))$ with $c>0$ and rational $d_f>0$. Similarly $\check g(u)=O(|u|^{d_g})$ for some $d_g>0$. Thus the denominator grows as $d_f\log|u|+O(1)$ and the numerator is $O(\log|u|)$, exactly as in (I), so
		\[
		\frac{B}{A}=2\frac{\log\left(\check g+\sqrt{\check g^2-1}\right)}{\log\left(\check f+\sqrt{\check f^2-1}\right)}=O(1)
		\]
		as $|u|\to\infty$. Hence $\frac{B(s)}{A(s)}$ is bounded near $s_0$.

		\noindent\textup{(III) Finite endpoints with small parameters.} Let $s_0\in\overline{I}\setminus I$ be finite, and suppose $A(s),B(s)\to 0$ as $s\to s_0$ within $I$. First suppose that $f$ has a one-sided derivative at $s_0$ along $I$. Lemma~\ref{lem:differentiability} gives $g'(s_0)$ finite and $f'(s_0)\neq 0$, so
		\[
		\frac{g(s)-1}{f(s)-1}\longrightarrow \frac{g'(s_0)}{f'(s_0)}
		\qquad \text{as } s\to s_0.
		\]
		Lemma~\ref{lem:small-parameter-ratio} then shows that $\frac{B(s)}{A(s)}$ is bounded near $s_0$.

			Now suppose that $f$ has no one-sided derivative at $s_0$ along $I$. Since $f(s)-1$ is semialgebraic and tends to $0$, the one-sided Puiseux description of semialgebraic function germs gives a local expansion in $\tau=|s-s_0|$~\cite[Example~7.3.6 and Proposition~8.1.12]{BCR}:
			\[
			f(s)-1=c\tau^r+o\left(\tau^r\right)
			\]
			along the chosen branch, with $c\ne 0$ and rational $r>0$. Since $I$ approaches $s_0$ from one side, $s-s_0=\epsilon\tau$ for a fixed sign $\epsilon\in\{\pm1\}$. If $r\ge 1$, then $f(s)-1$ would have a finite one-sided derivative at $s_0$ along $I$ (when $r=1$ the derivative is $\epsilon c$, and when $r>1$ it is $0$), contrary to the present assumption. Hence $0<r<1$, and therefore
		\begin{equation}\label{eq:branch-quotient-diverges}
			\lim_{s\to s_0}\left|\frac{f-1}{s-s_0}\right|=\infty.
		\end{equation}
		We next show that this singular behavior of $f$ does not make $(g-1)/(f-1)$ unbounded. Using~\eqref{eq:g-z-polynomial} and $z(s)=2f(s)$, write
		\[
		g(s)=G_\kappa(s)f^\kappa+G_{\kappa-1}(s)f^{\kappa-1}+\cdots+G_1(s)f+G_0(s),
		\qquad G_i(s)=2^iH_i(s).
		\]
		Since $f(s_0)=g(s_0)=1$, we have $\sum_{i=0}^{\kappa}G_i(s_0)=1$. Therefore, after adding and subtracting $\sum_iG_i(s)$,
		\[
		\frac{g-1}{f-1}=\sum_{i=1}^{\kappa}G_i(s)\frac{f^i-1}{f-1}+\frac{\sum_{i=0}^{\kappa}G_i(s)-1}{f-1}.
		\]
		For each $i\ge 1$,
		\[
		\frac{f^i-1}{f-1}=1+f+\cdots+f^{i-1}\longrightarrow i
		\qquad \text{as } s\to s_0,
		\]
		so the sum has a finite limit. The numerator of the remaining term vanishes at $s_0$ and is therefore $O(|s-s_0|)$, while the denominator dominates $|s-s_0|$ by~\eqref{eq:branch-quotient-diverges}; hence that remaining term tends to $0$. Thus $\frac{g-1}{f-1}$ has a finite limit, and Lemma~\ref{lem:small-parameter-ratio} again gives boundedness of $\frac{B(s)}{A(s)}$ near $s_0$.

		It remains only to record that the preceding estimates control all other endpoint behaviors. If $A(s)\to\infty$ while $B(s)$ stays bounded, then $B(s)/A(s)\to 0$. If $B(s)\to 0$ while $A(s)$ stays bounded away from $0$, then again $B(s)/A(s)\to 0$. If both $A(s)$ and $B(s)$ remain bounded and $A(s)$ stays bounded away from $0$, then the ratio is bounded. Finally, Lemma~\ref{lem:boundary-behavior}(II) rules out the case $A(s)\to 0$ while $B(s)$ stays away from $0$, and Lemma~\ref{lem:boundary-behavior}(III) reduces the case $B(s)\to\infty$ to the large-parameter case above.

		Thus $\frac{B(s)}{A(s)}$ is bounded on $I$. Since $I$ was arbitrary, the ratio is bounded on every branch interval in $\mathcal I$. Combining these bounds with the finitely many vertical and zero-dimensional contributions from the reduction above, we obtain a common finite bound over all admissible cells. By the necessary condition~\eqref{slope}, the corresponding surgery slopes form a bounded set.

		It remains to justify the final $\mathrm{SL}(2,\mathbb{R})$ statement. Let $\rho_{\mathrm{SL}}$ be a non-abelian $\mathrm{SL}(2,\mathbb{R})$ representation with hyperbolic meridian image, and put $M=\rho_{\mathrm{SL}}(\mu)$. Its projection to $\mathrm{PSL}(2,\mathbb{R})$ is hyperbolic. We claim that this projection is still non-abelian. If not, then for every matrix $N$ in the image of $\rho_{\mathrm{SL}}$ one has $MN=\pm NM$. The minus sign cannot occur: it would imply $NMN^{-1}=-M$, hence $\operatorname{tr}M=0$, contradicting the hyperbolicity of $M$. Thus every $N$ commutes with $M$, so the image lies in the centralizer of a hyperbolic matrix, which is abelian. This contradicts the non-abelianity of $\rho_{\mathrm{SL}}$. Hence any such $\mathrm{SL}(2,\mathbb{R})$ representation projects to one of the $\mathrm{PSL}(2,\mathbb{R})$ representations already excluded above.
	\end{proof}

		We record the limited effectivity provided by the proof before turning to examples.
\begin{remark}\label{rem:effective-bound}
The argument gives, in principle, an effective but generally coarse constant in Theorem~\ref{thm:main}. For a fixed knot, it is obtained from the Riley polynomial, the longitude trace, a semialgebraic decomposition of the admissible set $\mathcal A\subset\mathbb{R}^2$, and Puiseux estimates at the finitely many endpoints and at infinity. We do not attempt to optimize this constant.
\end{remark}

	\subsection{Computational illustrations}
The following examples illustrate the parameter set $\Omega$ and the endpoint estimates for $B/A$; they are not intended to give sharp global maxima.

\begin{example}[The knot $K(5,3)$]
For $K(5,3)$ one has
\[
\varPhi(t,s)=(s^2+3s+3)-(s+1)(t+t^{-1}).
\]
Thus, with $z=t+t^{-1}$,
\[
 z(s)=\frac{s^2+3s+3}{s+1},
 \qquad
 f(s)=\frac{z(s)}2=\frac{s^2+3s+3}{2(s+1)}.
\]
The hyperbolic condition $z>2$ gives
\[
\Omega=(-1,\infty).
\]
A direct trace calculation for the longitude gives
\[
 g(s)=\cosh B(s)=\frac{s^4+5s^3+7s^2+4s+2}{2(s+1)^2}.
\]
Therefore
\[
\frac{B(s)}{A(s)}
=2\frac{\operatorname{arccosh} g(s)}{\operatorname{arccosh} f(s)}.
\]
At the unbounded end, $f(s)\sim s/2$ and $g(s)\sim s^2/2$, so
\[
\lim_{s\to+\infty}\frac{B(s)}{A(s)}=4.
\]
At the finite endpoint, putting $c=(s+1)^{-1}$ gives
\[
 f(s)=\frac12(c+c^{-1}+1),
 \qquad
 g(s)=\frac12(c^2+c-2+c^{-1}+c^{-2}),
\]
and hence
\[
\lim_{s\to-1^+}\frac{B(s)}{A(s)}=4.
\]
On every compact subinterval of $(-1,\infty)$, zeros of $B$ do not affect boundedness of the quotient. Thus this example illustrates the unbounded-end case and the finite large-parameter endpoint case of the proof.
\end{example}

The next example has two projected components of $\Omega$, and so illustrates how the endpoint analysis behaves on different branches.
\begin{example}[The knot $K(9,7)$]
For $K(9,7)$ the symmetric Riley equation is linear in $z=t+t^{-1}$:
\[
\varPhi(t,s)=\bigl(s^4+7s^3+17s^2+16s+5\bigr)-\left(s^3+5s^2+7s+2\right)z.
\]
Thus
\[
 z(s)=\frac{s^4+7s^3+17s^2+16s+5}{s^3+5s^2+7s+2},
 \qquad
 f(s)=\frac{z(s)}2.
\]
Since
\[
 s^3+5s^2+7s+2=(s+2)(s^2+3s+1),
\]
one checks that $z>2$ precisely on
\[
\Omega=\left(\frac{-3-\sqrt5}{2},-2\right)\cup
\left(\frac{-3+\sqrt5}{2},\infty\right).
\]
The longitude trace gives
\[
 g(s)=\cosh B(s)=
 \frac{s^8+13s^7+67s^6+174s^5+239s^4+168s^3+58s^2+12s+2}
 {2(s^2+3s+1)^2}.
\]
As $s\to+\infty$, we have $f(s)\sim s/2$ and $g(s)\sim s^4/2$, and therefore
\[
\lim_{s\to+\infty}\frac{B(s)}{A(s)}=8.
\]
At the finite endpoint $s_+=(-3+\sqrt5)/2$ of the second component, one has
\[
 f(s)\sim \frac{1/4-\sqrt5/20}{s-s_+},
 \qquad
 g(s)\sim \frac{3/20-\sqrt5/20}{(s-s_+)^2},
\]
so
\[
\lim_{s\to s_+^+}\frac{B(s)}{A(s)}=4.
\]
On the first component, if $s_-=(-3-\sqrt5)/2$, then as $s\to s_-^+$,
\[
 f(s)\sim \frac{1/4+\sqrt5/20}{s-s_-},
 \qquad
 g(s)\sim \frac{3/20+\sqrt5/20}{(s-s_-)^2},
\]
again giving
\[
\lim_{s\to s_-^+}\frac{B(s)}{A(s)}=4.
\]
Finally, as $s\to-2^-$, $f(s)\to\infty$ while $g(s)\to1$, and hence
\[
\lim_{s\to-2^-}\frac{B(s)}{A(s)}=0.
\]
Thus this example displays two projected components of $\Omega$ and several endpoint behaviors that occur in the general proof.
\end{example}

	\subsection{Consequences and limitations}
\begin{corollary}\label{cor:univ-cover-method}
For a fixed two-bridge knot $K$, there is a constant $C(K)>0$ with the following property. If $q\neq0$, $|p/q|>C(K)$, and
\[
\widetilde{\rho}:\pi_1\bigl(S^3_{p/q}(K)\bigr)\longrightarrow \widetilde{\mathrm{PSL}}(2,\mathbb{R})
\]
is any representation, then the projected representation
\[
\pi\circ\widetilde{\rho}:\pi_1\bigl(S^3_{p/q}(K)\bigr)\longrightarrow \mathrm{PSL}(2,\mathbb{R})
\]
is not both hyperbolic and non-abelian. Equivalently, for every such large rational slope, each projected representation is either abelian or has non-hyperbolic meridian image.
\end{corollary}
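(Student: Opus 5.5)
The plan is to take $C(K)$ to be the constant produced by Theorem~\ref{thm:main} and derive the corollary directly from it. Once that constant is fixed, the corollary needs no new estimates. It amounts to the observation that composing with the covering projection $\pi\colon\widetilde{\mathrm{PSL}}(2,\mathbb{R})\to\mathrm{PSL}(2,\mathbb{R})$ turns any representation of $\pi_1\bigl(S^3_{p/q}(K)\bigr)$ into a $\mathrm{PSL}(2,\mathbb{R})$-representation of the same group, which is exactly the kind of object Theorem~\ref{thm:main} controls.

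Concretely, I would argue by contradiction. Fix $q\neq0$ with $|p/q|>C(K)$ and a representation $\widetilde{\rho}\colon\pi_1\bigl(S^3_{p/q}(K)\bigr)\to\widetilde{\mathrm{PSL}}(2,\mathbb{R})$. Since $\pi$ is a group homomorphism, $\bar\rho:=\pi\circ\widetilde{\rho}$ is a homomorphism from $\pi_1\bigl(S^3_{p/q}(K)\bigr)$ to $\mathrm{PSL}(2,\mathbb{R})$. The notions ``hyperbolic'' and ``non-abelian'' in the corollary are defined for $\bar\rho$ exactly as in Section~2, by reference to $\bar\rho(\mu)$ and to the image of $\bar\rho$, with $\mu$ the image of the fixed knot meridian. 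Suppose $\bar\rho$ were both hyperbolic and non-abelian. Then $S^3_{p/q}(K)$ would admit a non-abelian $\mathrm{PSL}(2,\mathbb{R})$ representation with hyperbolic meridian image, which contradicts Theorem~\ref{thm:main} because $|p/q|>C(K)$. Hence $\bar\rho$ is either abelian or has non-hyperbolic meridian image; this is precisely the stated equivalent formulation, obtained by negating the conjunction.

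The only point I would check explicitly is that nothing in the proof of Theorem~\ref{thm:main} assumed more about the $\mathrm{PSL}(2,\mathbb{R})$ representation than that it is a homomorphism from the filled group. In particular, the proof never needs a lift to $\widetilde{\mathrm{PSL}}(2,\mathbb{R})$ or to $\mathrm{SL}(2,\mathbb{R})$ on the filled manifold itself. The $\mathrm{SL}(2,\mathbb{R})$ lift in Section~2 was taken only on the knot exterior, where $H^2(S^3\setminus\nu K;\mathbb{Z}/2)=0$, so it is available for $\bar\rho$ as well. I do not expect any genuine obstacle; the step needing care is simply this bookkeeping, namely confirming that ``hyperbolic'' is used for the projected representation in the sense of Section~2 and that the constant is the same $C(K)$.
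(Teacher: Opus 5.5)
Your proposal is correct and is essentially the paper's argument: take $C(K)$ from Theorem~\ref{thm:main} and note that $\pi\circ\widetilde{\rho}$ is a $\mathrm{PSL}(2,\mathbb{R})$ representation of $\pi_1\bigl(S^3_{p/q}(K)\bigr)$, so if it were both hyperbolic and non-abelian it would contradict the theorem. Your bookkeeping remark also matches the paper, which likewise notes that only the canonical projection is used and that no separate lifting argument is needed on the filled manifold.
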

\begin{proof}
Let
\[
\pi:\widetilde{\mathrm{PSL}}(2,\mathbb{R})\longrightarrow \mathrm{PSL}(2,\mathbb{R})
\]
be the covering projection. This step uses only the canonical projection from the universal cover; it does not require a separate lifting argument.
Let $C(K)$ be the constant from Theorem~\ref{thm:main}. If $|p/q|>C(K)$ and $\pi\circ \widetilde{\rho}$ were non-abelian and hyperbolic, then $S^3_{p/q}(K)$ would admit a non-abelian $\mathrm{PSL}(2,\mathbb{R})$ representation with hyperbolic meridian image, contradicting Theorem~\ref{thm:main}. Therefore the projection must be either abelian or non-hyperbolic.
\end{proof}

The following remarks clarify what the theorem and corollary do not imply.
\begin{remark}
Corollary~\ref{cor:univ-cover-method} does not rule out left-orderability for large surgeries. Rather, it shows that such an argument cannot rely on projected representations that are both non-abelian and hyperbolic.
After projection to $\mathrm{PSL}(2,\mathbb{R})$, any relevant representation must either have abelian image or have elliptic, parabolic, or otherwise non-hyperbolic meridian image.
\end{remark}

\begin{remark}
Theorem~\ref{thm:main} by itself is not a vanishing theorem for Seifert volume. In the formulation of Seifert volume via $\mathrm{PSL}(2,\mathbb{R})$ representations and Godbillon--Vey invariants~\cite{Volume,SU}, the theorem eliminates only non-abelian representations with hyperbolic meridian image for large slopes. Abelian representations contribute trivially in the standard representation-volume settings used here~\cite[Proposition~6.1]{SU}~\cite[Lemma~3.6]{DLW}.
Thus any possible large-slope nonzero contribution would have to come from non-abelian $\mathrm{PSL}(2,\mathbb{R})$ representations with non-hyperbolic meridian image. Additional peripheral input is needed to turn this observation into a Seifert-volume vanishing theorem.
\end{remark}

\section*{Acknowledgements}
The second author was supported by NSFC (No.~11901413). He thanks Professor Yi Liu for his hospitality during the second author's visit to BICMR in 2022.


\begin{thebibliography}{99}

		\bibitem{BCR}
		J. Bochnak, M. Coste, and M.-F. Roy, \emph{Real Algebraic Geometry}, Ergeb. Math. Grenzgeb. (3), vol.~36, Springer-Verlag, Berlin, 1998. DOI:10.1007/978-3-662-03718-8.

		\bibitem{Orderable}
		S. Boyer, D. Rolfsen, and B. Wiest, Orderable 3-manifold groups, Ann. Inst. Fourier (Grenoble) \textbf{55} (2005), no.~1, 243--288. DOI:10.5802/aif.2098.

		\bibitem{Volume}
		R. Brooks and W. Goldman, Volumes in Seifert space, Duke Math. J. \textbf{51} (1984), no.~3, 529--545. DOI:10.1215/S0012-7094-84-05126-3.

		\bibitem{CullerShalen}
		M. Culler and P. B. Shalen, Varieties of group representations and splittings of 3-manifolds, Ann. of Math. (2) \textbf{117} (1983), no.~1, 109--146. DOI:10.2307/2006973.

		\bibitem{DLW}
		P. Derbez, Y. Liu, and S. Wang, Chern--Simons theory, surface separability, and volumes of 3-manifolds, J. Topol. \textbf{8} (2015), no.~4, 933--974. DOI:10.1112/jtopol/jtv023.

		\bibitem{Gao}
		X. Gao, Slope of orderable Dehn filling of two-bridge knots, J. Knot Theory Ramifications \textbf{31} (2022), no.~1, Paper No.~2250006, 24 pp. DOI:10.1142/S0218216522500067.

		\bibitem{HakaTeraGenusOne}
		R. Hakamata and M. Teragaito, Left-orderable fundamental groups and Dehn surgery on genus one 2-bridge knots, Algebr. Geom. Topol. \textbf{14} (2014), no.~4, 2125--2148. DOI:10.2140/agt.2014.14.2125.

		\bibitem{SU}
		V. T. Khoi, A cut-and-paste method for computing the Seifert volumes, Math. Ann. \textbf{326} (2003), no.~4, 759--801. DOI:10.1007/s00208-003-0438-5.

		\bibitem{Tran2}
		V. T. Khoi, M. Teragaito, and A. T. Tran, Left orderable surgeries of double twist knots II, Canad. Math. Bull. \textbf{64} (2021), no.~3, 624--637. DOI:10.4153/S0008439520000703.

		\bibitem{Riley2}
		R. Riley, Parabolic representations of knot groups, I, Proc. London Math. Soc. (3) \textbf{24} (1972), no.~2, 217--242. DOI:10.1112/plms/s3-24.2.217.

		\bibitem{Riley}
		R. Riley, Nonabelian representations of 2-bridge knot groups, Quart. J. Math. Oxford Ser. (2) \textbf{35} (1984), no.~138, 191--208. DOI:10.1093/qmath/35.2.191.

		\bibitem{TeragaitoTwist}
		M. Teragaito, Left-orderability and exceptional Dehn surgery on twist knots, Canad. Math. Bull. \textbf{56} (2013), no.~4, 850--859. DOI:10.4153/CMB-2012-011-0.

		\bibitem{Thakar2023}
		O. Thakar, Left-orderable surgeries on the knot $6_2$ via hyperbolic $\widetilde{\mathrm{PSL}}(2,\mathbb{R})$-representations, arXiv:2307.00107, 2023.

		\bibitem{Tran}
		A. T. Tran, Left orderable surgeries of double twist knots, J. Math. Soc. Japan \textbf{73} (2021), no.~3, 753--765. DOI:10.2969/jmsj/84058405.

	\end{thebibliography}
\end{document}